\documentclass[reqno,a4paper]{amsart}
%%%%%%%%%%%%%%%%%%%%%%%%%%%%%%%%%%%%%%%%%%%%%%%%%%%%%%%%%%%%%%%%%%%%%%%%%%%%%%%%%%%%%%%%%%%%%%%%%%%%%%%%%%%%%%%%%%%%%%%%%%%%%%%%%%%%%%%%%%%%%%%%%%%%%%%%%%%%%%%%%%%%%%%%%%%%%%%%%%%%%%%%%%%%%%%%%%%%%%%%%%%%%%%%%%%%%%%%%%%%%%%%%%%%%%%%%%%%%%%%%%%%%%%%%%%%
\usepackage{amsfonts}
\usepackage{amssymb, amsmath,amsthm,enumerate}
\usepackage[mathscr]{eucal}

\theoremstyle{plain}
\newtheorem{thm}{Theorem}[section]
\newtheorem{fact}[thm]{Fact}
\newtheorem{cor}[thm]{Corollary}
\newtheorem{lem}[thm]{Lemma}
\theoremstyle{definition}

\newtheorem{exa}[thm]{Example}
\newtheorem{op}[thm]{Open Problem}
\newtheorem*{FGP}{The First Gap Lemma}
\newtheorem*{SGP}{The Second Gap Lemma}
\newtheorem*{TGP}{The Third Gap Lemma}

\begin{document}
\title{The center of distances of some multigeometric series}
\author[M. Banakiewicz, A. Bartoszewicz \and F. Prus-Wi\'{s}niowski]{Micha\l\ Banakiewicz, Artur Bartoszewicz \and Franciszek Prus-Wi\'{s}niowski}

\newcommand{\acr}{\newline\indent}

\address{\llap{*\,}Studium of Mathematics\acr
 West Pomeranian University of Technology\acr
Al. Piast\'{o}w 48\acr
 PL-70-311 Szczecin\acr
  Poland}
\email{mbanakiewicz@zut.edu.pl}

\address{\llap{*\,}Institute of Mathematics\acr
\L\'{o}d\'{z} University of Technology\acr
ul. w\'{o}lcza\'{n}ska 215\acr
PL-93-005 \L\'{o}d\'{z}\acr
Poland}
\email{artur.bartoszewicz@p.lodz.pl}

\address{\llap{*\,}Instytut Matematyki\acr
Uniwersytet Szczeci\'{n}ski\acr
ul. Wielkopolska 15\acr
PL-70-453 Szczecin\acr
Poland}
\email{franciszek.prus-wisniowski@usz.edu.pl}

\subjclass[2010]{Primary 40A99; Secondary: 11B05, 11K31}
\keywords{ center of distances, set of subsums, fast convergence, multigeometric series}

\begin{abstract}
Basic properties of the center of distances of a set are investigated. Computation of the center for achievement sets is particularly aimed at. A new sufficient condition for the center of distances of the set of subsums of a fast convergent series to consist of only 0 and the terms of the series is found. A complete description of the center of distances for some particular type of multigeometric series is provided. In particular, the center of distances $C(E)$ is the union of all centers of distances of $C(F_n)$ where $F_n$ is the set of all $n$-initial subsums of the discussed multigeometric series satisfying some special requirements. Several open questions are raised.
\end{abstract}

\maketitle
The notion of the center of distances of a set in a metric space was introduced recently by Bielas, Plewik and Walczy\'{n}ska \cite{BPW} and successfully applied for an impossibility proof. Using the properties of the center of distances, they proved that a special set $Z$ closely related to the Guthrie-Nymann Cantorval is not the set of subsums of any series. Their result is more interesting when phrased in the language of purely atomic probability measures (cf. \cite{Fer84}). Such a direction of research was the main topic of the paper \cite{BGM} with many interesting results, some still awaiting completion. Shortly speaking, Bielas, Plewik and Walczy\'{n}ska invented a new technique for proving that a given set is not the range of any purely atomic probabilistic measure. They have realized that the computation of the center of distances - even for relatively simple sets - is not an easy task because it requires skillful use of fractions. Since a complete characterization of which sets are ranges of purely atomic finite measures and which are not is of great interest to a number of mathematicians, we have decided to investigate the new concept of the center of distances.

It is worth mentioning  that the center of distances, although the first, is not the only tool for proving that a certain set is not the range of a purely atomic finite measure. Answering a question raised by W. Kubi\'{s} in 2015, the authors of \cite{BGFPS} proved that all members of a special family of geometric Cantorvals are not ranges of such measures by a different method.

Our main interest in not on the general case, but on computing centers of distances for sets of subsums of convergent series of positive terms. Given a nonincreasing squence $(a_n)$ of positive numbers converging to 0, the set
$$
E\ =\ E(a_n)\ :=\ \left\{x\in\mathbb R:\ \exists A\subset\mathbb N\ \ x=\,\sum_{n\in A}a_n\:\right\}
$$
is called \textsl{the achievement set} of the sequence $(a_n)$ or \textsl{the set of subsums} of the series $\sum a_n$. Throughout this note, by a series $\sum a_n$ we will always understand a convergent seris $\sum a_n$ with positive and nonincreasing terms, although, in general, the convergence of the series or even the convergence of $(a_n)$ to 0 are not necessary to define and investigate the set $E(a_n)$ (\cite{Na}, \cite{N}, \cite{P}, \cite[Thm. 21.3]{BFP}). The $k$-th remainder of the series will be denoted by $r_k:=\sum_{n>k}a_n$. In particular, $r_0=s$, where $s$ denotes the sum of the series. We will use the symbol $s$ for denoting the sum of the series and for no other purpose. The set of subsums of the $k$-th remainder series $\sum_{n=k+1}^\infty a_n$ will be denoted by
$$
E_k\ =\ E_k(a_n)\ :=\ \left\{x\in\mathbb R:\ \exists\ A\subset\{k+1,\,k+2,\,\ldots\,\}\ \ x\,=\,\sum_{n\in A}a_n\;\right\}.
$$
The set of $k$-initial subsums of the series will be denoted by
$$
F_k\ =\ F_k(a_n)\ :=\ \left\{x\in\mathbb R:\ \exists\ A\subset\{1,\,\ldots\,\,k\,\}\ \ x\,=\,\sum_{n\in A}a_n\;\right\}.
$$
The set $F=F(a_n):= \bigcup_{k\in\mathbb N} F_k$ is then the set of all finite subsums or, more precisely, of all sums of finite subseries.
It is obvious that
$$
E\ =\ F_k\;+\;E_k\ =\ \bigcup_{f\in F_k}(f+E_k)
$$
for any $k\in\mathbb N$. Clearly, $E_{k+1}\subset E_k$ and $F_k\subset F_{k+1}$ for all $k$.

The complete characterization of all possible topological types of sets of subsums is known. A set $P\subset\mathbb R$ is said to be a \textsl{multi-interval set} if it is the union of a finite family of closed and bounded intervals. A set $P\subset\mathbb R$ is said to be a \textsl{Cantor set} if it is nonempty, bounded, perfect and nowhere dense, that is, if it is homeomorphic to the classic Cantor ternary set. A set $P\subset\mathbb R$ is said to be a \textsl{Cantorval} if it is homeomorphic to the set
$$
GN\ :=\ C\ \cup\ \bigcup_n G_{2n-1}
$$
where $C$ denotes the classic Cantor ternary set and $G_{2n-1}$ is the union of all $4^{n-1}$ open intervals removed at the $(2n-1)$-st step of the standard geometric construction of $C$. It is known that a Cantorval is exactly a nonempty compact set in $\mathbb R$ such that  it is the closure of its interior and both endpoints of every nontrivial component are accumulation points of its trivial components. Other topological characterizations of Cantorvals can be found in \cite{MO} and \cite{BFP}. The well-known Guthrie-Nymann classification theorem (\cite[Thm. 1]{GN88},\cite{P}) asserts that the set $E$ of subsums of a series $\sum a_n$ is either a multi-interval set or a Cantor set, or a Cantorval. An analytic characterization of the above cases remains an open and a very challenging question for more than 30 years now. Only partial results in that direction are known.

A series $\sum a_n$ is said to be \textsl{slowly convergent} if $a_n\le r_n$ for all $n$. The set $E$ is a single interval if and only if the series $\sum a_n$ is slowly convergent. Even more, the set $E$ is a multi-interval set if and only if $a_n\le r_n$ for all sufficiently large $n$ (\cite{Kak14}, \cite{Kak14a}, \cite{P}).

A series $\sum a_n$ is said to be fast convergent if $a_n>r_n$ for all $n$. Fast convergence of $\sum a_n$ is a sufficient condition for $E$ to be a Cantor set (\cite{Kak14}, \cite{Kak14a}, \cite{P}). Some other sufficient conditions for $E$ to be a Cantor set can be found in \cite{BBFS}, \cite{BP} and \cite{BFS}.

Only a few sufficient conditions for $E$ to be a Cantorval are known (\cite{BBFS}, \cite{BFS} and \cite{Na}) and they all are formulated for multigeometric series. A series $\sum a_n$ will be called \textsl{multigeometric} if there are positive integers $m$ and $l_1\ge l_2\ge\ldots\ge l_m$, and a number $q\in(0,\,1)$ such that
$$
a_{km+i}\ =\ l_iq^k \hspace{.3in}\text{for all $k\in\mathbb N_0$ and all $i\in\{1,\ldots,m\,\}$.}
$$
It will be denoted by $\sum(l_1,\,\ldots,\,l_m;q)$. The multigeometric series play a great role in the  rapidly developing study of algebraic and topological properties of achievement sets because their sets of subsums are, in most cases, relatively easy to build and analyze from the computational point of view (\cite{N}, \cite{BFS}, \cite{BBFS}).

Given a nonempty bounded and perfect set $P\subset \mathbb R$, the bounded components of its complement are called \textsl{gaps} or $P$\textsl{-gaps}. Since the sets of subsums are bounded and perfect (\cite{Kak14}, \cite{Kak14a}, \cite{P}), understanding their gaps amounts to understanding their geometric properties. There are three very useful  facts about the gaps  and we will formulate them below for reader's convenience.

\begin{FGP}(\cite[Fact 21.9]{BFP})
If $a_k>r_k$ for some index $k$, then the open interval $(r_k,\,a_k)$ is a gap of $E$.
\end{FGP}

Sometimes, like in the following fact, it is convenient to arrange all elements of $F_k$ into an increasing finite sequence $(f_j^{(k)})_{j=0}^{m(k)}$.

\begin{SGP}(\cite[Lemma 2]{NS00}, \cite[Fact 21.11]{BFP})
 Let $(a,\,b)$ be an $E$-gap and let $k=\max\{n:\ a_n\ge b-a\,\}$. Then $b\in F_k$. Moreover, if $b=f_j^{(k)}$, then $a=f_{j-1}^{(k)}+r_k$.
 \end{SGP}

 \begin{TGP} (\cite[Lemma 2.4]{BGM}, \cite[Lemma 4]{BGFPS})
 Suppose that $(a,\,b)$ is an $E$-gap such that all $E$-gaps lying to the left of it are shorter than $b-a$. Then $b=a_n$ and $a=r_n$ for some $n\in\mathbb N$.
 \end{TGP}

\section{The general case}

Let $A$ be a nonempty subset of a metric space $(X,\,d)$. The center of distances $C(A)$ of the set $A$ is defined by
$$
C(A)\ :=\ \left\{\alpha\in[0,\,+\infty):\ \forall\ x\in A\ \ \exists\ y\in A\ \ d(x,\,y)=\alpha\ \right\}.
$$
Evidently, $0\in C(A)$ always.

Now, let $(X,\,d)$ be a separable metric space and let $A\subset X$ be a nonempty closed subset. A sequence $(a_n)$ of elements of $X$ will be said to be concentrated on $A$ if $A$ is the set of all accumulation points of the sequence.

\begin{fact}
\label{f11}
For every nonempty closed set $A$ in a separable metric space $X$ there is a sequence $(a_n)$ with values in $A$ and concentrated on A.
\end{fact}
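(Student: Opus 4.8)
The plan is to use separability to extract a countable dense subset of $A$, spread its members out into a sequence in which each of them recurs infinitely often, and then observe that the closedness of $A$ automatically keeps all accumulation points inside $A$.

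First I would recall that a subspace of a separable metric space is again separable (for metric spaces separability is equivalent to second countability, a hereditary property). Since $A$ is nonempty, it therefore contains a countable set $D=\{p_1,p_2,\dots\}$ that is dense in $A$; if $A$ is finite we simply repeat its points so that $D$ is still indexed by $\mathbb N$. Because $A$ is closed in $X$, the closure of $D$ taken in $X$ equals $A$, i.e.\ $\overline D=A$.

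Next I would construct the sequence. Fixing a bijection between $\mathbb N$ and $\mathbb N\times\mathbb N$ — or, concretely, concatenating the finite blocks $(p_1),\,(p_1,p_2),\,(p_1,p_2,p_3),\dots$ — yields a sequence $(a_n)$ with all values in $D\subseteq A$ in which every $p_k$ equals $a_n$ for infinitely many indices $n$. Hence each $p_k$ is an accumulation point of $(a_n)$, since every neighborhood of $p_k$ contains $a_n=p_k$ for infinitely many $n$.

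Finally I would pin down the accumulation set. Let $L$ be the set of accumulation points of $(a_n)$. The set $L$ is closed: a point within $\varepsilon/2$ of a point of $L$ has $a_n$ within $\varepsilon$ of it for infinitely many $n$, by the triangle inequality. Thus $D\subseteq L$ gives $A=\overline D\subseteq L$, while conversely every accumulation point of a sequence lying in the closed set $A$ must itself lie in $A$, so $L\subseteq A$. Therefore $L=A$, which is exactly the assertion that $(a_n)$ is concentrated on $A$. There is no real obstacle here; the only points needing a little care are the hereditary separability of $A$, the use of closedness of $A$ to exclude spurious accumulation points, and the degenerate case of a finite (in particular one-point) set $A$, which is covered by allowing repetitions in $D$ so that, e.g., a singleton $A=\{p\}$ is recovered as the accumulation set of the constant sequence.
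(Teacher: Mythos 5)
Your proof is correct, but your construction differs from the paper's. The paper fixes a countable base $\{U_n\}$ of $X$ and, for each $n$ with $U_n\cap A\neq\emptyset$, selects one point $a_n$ of that intersection; density of the resulting sequence in $A$ is then inherited from the fact that every neighbourhood of a point of $A$ contains (many) basic sets meeting $A$. You instead take a countable dense subset $D=\{p_1,p_2,\dots\}$ of $A$ (using hereditary separability of metric spaces) and enumerate it with each $p_k$ recurring infinitely often via the block concatenation $(p_1),(p_1,p_2),(p_1,p_2,p_3),\dots$. Both arguments use closedness of $A$ in the same way, to force the accumulation set $L$ into $A$; where they differ is in how they guarantee $A\subseteq L$. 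Your version makes every $p_k$ an accumulation point for the trivial reason that it is repeated infinitely often, and then upgrades $D\subseteq L$ to $A\subseteq L$ by the (correctly proved) closedness of $L$; this handles isolated points of $A$ and finite $A$ completely transparently, which is a point the paper's one-line selection argument glosses over (a point of $A$ lying in only finitely many basic sets would need separate attention there). The paper's approach buys brevity and produces a sequence indexed directly by the base; yours is slightly longer but more self-contained and more robust in the degenerate cases. Either is acceptable.
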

\begin{proof}
Let $\{U_n:\ n\in\mathbb N\,\}$ be a countable basis of $(X,\,d)$. For every $n\in\mathbb N$ such that $U_n\cap A\ne\emptyset$ choose one element $a_n$ of the intersection. The resulting sequence  $(a_n)$ is concentrated on $A$.
\end{proof}
The well-known theorem of John von Neumann says that two sequences $(a_n)$ and $(b_n)$ are concentrated on the same set if and only if there is a permutation $\pi:\,\mathbb N\to\mathbb N$ such that $d(a_n,\,b_{\pi(n)})\to 0$ (\cite{vN}, \cite{H}).

\begin{thm}
\label{t12}
Let $A$ be a closed subset of a compact metric space $(X,\,d)$ and let $(a_n)$, $(b_n)$ be two sequences concentrated on $A$. Then
$$
C(A)\ =\ \left\{ \alpha\in[0,\,+\infty):\ \text{there is a permutation $\pi:\,\mathbb N\to\mathbb N$}\ \ d(a_n,\,b_{\pi(n)})\to\alpha\:\right\}.
$$
\end{thm}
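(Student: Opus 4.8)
The plan is to prove the two inclusions $\supseteq$ and $\subseteq$ separately. The inclusion $\supseteq$ will be a short compactness argument, while $\subseteq$ will come from an explicit back-and-forth construction of the required permutation, with the hypothesis $\alpha\in C(A)$ driving the recursion. For $\supseteq$: fix $\alpha$ in the right-hand side together with a permutation $\pi$ such that $d(a_n,b_{\pi(n)})\to\alpha$, and fix $x\in A$. Since $A$ is the set of accumulation points of $(a_n)$, I can pass to a subsequence with $a_{n_k}\to x$, and then, using compactness of $X$, to a further subsequence along which $b_{\pi(n_k)}$ converges to some $y$. Because $\pi$ is injective and the $n_k$ are pairwise distinct, the indices $\pi(n_k)$ occurring along this last subsequence are pairwise distinct, so $y$ is an accumulation point of $(b_n)$, hence $y\in A$; passing to the limit in $d(a_{n_k},b_{\pi(n_k)})$ gives $d(x,y)=\alpha$. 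As $x$ was an arbitrary point of $A$, this shows $\alpha\in C(A)$.

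For $\subseteq$ I would first reduce to the case that both sequences take values in $A$. Compactness forces $d(a_n,A)\to 0$, for otherwise some subsequence would remain at distance at least $\varepsilon$ from $A$ while admitting a convergent sub-subsequence whose limit would be simultaneously an accumulation point of $(a_n)$ — hence in $A$ — and at distance at least $\varepsilon$ from $A$. Choosing $a_n'\in A$ with $d(a_n,a_n')\to 0$ and, likewise, $b_n'\in A$, von Neumann's theorem (applied with the identity permutation) shows that $(a_n')$ and $(b_n')$ are again concentrated on $A$, and a permutation realizes the limit $\alpha$ for the primed sequences exactly when it does so for the original ones. So I may assume $a_n,b_n\in A$ for all $n$.

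Now fix $\alpha\in C(A)$ and build $\pi$ by a back-and-forth recursion with shrinking tolerances $\varepsilon_k=1/k$. At an odd stage $k$, let $n$ be the least integer not yet in the domain; since $a_n\in A$ and $\alpha\in C(A)$ there is $y\in A$ with $d(a_n,y)=\alpha$, and since $y$ is an accumulation point of $(b_m)$ the set $\{\,m:d(b_m,y)<\varepsilon_k\,\}$ is infinite, so it contains some index $m$ not yet used; put $\pi(n)=m$. At an even stage $k$, symmetrically, let $m$ be the least integer not yet in the range, choose $z\in A$ with $d(b_m,z)=\alpha$, then an unused $n$ with $d(a_n,z)<\varepsilon_k$, and put $\pi(n)=m$. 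The standard bookkeeping (odd stages exhaust the domain, even stages exhaust the range, no index is ever reassigned) makes $\pi$ a bijection. Every pair $(n,\pi(n))$ is created at exactly one stage $k$ and then satisfies $|d(a_n,b_{\pi(n)})-\alpha|<\varepsilon_k$; since only finitely many values of $n$ are treated before any given stage, it follows that $d(a_n,b_{\pi(n)})\to\alpha$, so $\alpha$ belongs to the right-hand side.

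The step I expect to be the main obstacle is the back-and-forth selection itself: guaranteeing, at every stage, that there is a still-unused index realizing a distance arbitrarily close to $\alpha$. This is exactly where $\alpha\in C(A)$ is used — it produces a point of $A$ at distance exactly $\alpha$ from the current term $a_n$ (respectively $b_m$) — and the concentration hypothesis then upgrades that single point of $A$ to infinitely many terms of the opposite sequence clustering near it, which leaves room to avoid the finitely many indices already committed. The remaining work is purely bookkeeping: checking that the recursion indeed yields a bijection and that the tolerances $\varepsilon_k$ force the distances to converge to $\alpha$.
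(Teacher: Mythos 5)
Your proof is correct. The $\supseteq$ half is exactly the paper's argument: pass to a subsequence $a_{n_k}\to x$, extract a convergent sub-subsequence of $(b_{\pi(n_k)})$ by compactness, and identify its limit $y$ as a point of $A$; you are in fact slightly more careful than the paper in observing that injectivity of $\pi$ makes the indices $\pi(n_k)$ pairwise distinct, which is what guarantees $y$ is an accumulation point of $(b_n)$ and hence lies in $A$. The difference is in the $\subseteq$ half: the paper does not prove it at all but cites it to Bielas--Plewik--Walczy\'{n}ska \cite[Thm.~1]{BPW}, whereas you give a complete self-contained argument --- first reducing to sequences valued in $A$ (via $d(a_n,A)\to 0$ and von Neumann's theorem, a step that is genuinely needed since ``concentrated on $A$'' does not force the terms to lie in $A$), then running a back-and-forth construction with tolerances $\varepsilon_k=1/k$, using $\alpha\in C(A)$ to produce a target point at distance exactly $\alpha$ and the concentration hypothesis to find infinitely many admissible unused indices near it. All the steps check out: the least-unused-index discipline makes $\pi$ a bijection, and since each stage commits only one pair, the stage number of $n$ tends to infinity with $n$, forcing $d(a_n,b_{\pi(n)})\to\alpha$. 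What your route buys is a proof that does not lean on the external reference; the cost is only length, since the underlying mechanism is the same kind of back-and-forth used in the cited source.
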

\begin{proof}
 The nontrivial inclusion $\subseteq$ has been proved by Bielas, Plewik and Walczy\'{n}ska \cite[Thm 1.]{BPW}. The reversed inclusion $\supseteq$ is easy. Indeed, take any $x\in A$ and let $d(a_n,\,b_{\pi(n)})\to\alpha$. The sequence $(a_n)$ is concentrated on $A$ and so $a_{n_m}\to x$. Since $X$ is compact, the sequence $(b_{\pi(n_m)})_{m\in\mathbb N}$ as a subsequence $\bigl( b_{\pi(n_{m_k})}\bigr)_{k\in\mathbb N}$ convergent to some $y\in A$. Then $d(x,\,y)=\lim_{k\to\infty}d(a_{n_{m_k}},\,b_{\pi(n_{m_k})})=\alpha$.
\end{proof}

\begin{cor}
\label{c13}
Let $A$ be a closed subset of a compact metric space $(X,\,d)$ and let $(a_n)$ be any sequence concentrated on $A$. Then
$$
C(A)\ =\ \left\{ \alpha\in[0,\,+\infty):\ \text{there is a permutation $\pi:\,\mathbb N\to\mathbb N$}\ \ d(a_n,\,a_{\pi(n)})\to\alpha\:\right\}.
$$
\end{cor}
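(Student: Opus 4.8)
The plan is to obtain Corollary~\ref{c13} as the diagonal case of Theorem~\ref{t12}. Concretely, I would apply Theorem~\ref{t12} to the pair of sequences $(a_n)$ and $(b_n)$ with the choice $b_n:=a_n$. The only hypothesis of Theorem~\ref{t12} that is not literally among the hypotheses of the corollary is that the second sequence be concentrated on $A$; but that sequence is again $(a_n)$, which is concentrated on $A$ by assumption, so the hypotheses of Theorem~\ref{t12} are satisfied and its conclusion, read with $b_n=a_n$, is exactly the asserted description of $C(A)$. Hence, strictly speaking, the corollary is proved by this single substitution.

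It is worth recording two remarks that show the statement is not vacuous and that nothing is lost in passing to one sequence. First, since $X$ is compact the closed set $A$ is compact and separable, so by Fact~\ref{f11} a sequence $(a_n)$ with values in $A$ and concentrated on $A$ does exist, and the right-hand side of the displayed formula is therefore a genuine characterization of $C(A)$ rather than an assertion about an empty family. Second, taking $\pi$ to be the identity permutation gives $d(a_n,a_{\pi(n)})=0$ for every $n$, which recovers the ever-present membership $0\in C(A)$; and the inclusion $\supseteq$ for a single sequence can also be seen directly, repeating verbatim the second half of the proof of Theorem~\ref{t12}: given $\alpha$ with $d(a_n,a_{\pi(n)})\to\alpha$ and given $x\in A$, choose $a_{n_m}\to x$, pass by compactness to a sub-subsequence along which $\bigl(a_{\pi(n_{m_k})}\bigr)_k$ converges to some $y\in A$, and conclude $d(x,y)=\alpha$.

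The main obstacle is thus essentially nonexistent: the content of Corollary~\ref{c13} is entirely contained in Theorem~\ref{t12}, and the only point needing a word is the harmless observation that one sequence concentrated on $A$ automatically supplies an admissible pair of sequences for that theorem. The reason for stating it separately is purely one of convenience: in the subsequent computations it is more economical to fix a single enumeration of a countable dense subset of the achievement set under study and to read off its center of distances from the asymptotics of $d(a_n,a_{\pi(n)})$ over all permutations $\pi$, rather than to juggle two enumerations at once.
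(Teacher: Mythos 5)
Your proposal is correct and coincides with the paper's intended derivation: the corollary is stated without proof precisely because it is the case $b_n:=a_n$ of Theorem~\ref{t12}, and your single substitution (plus the harmless check that the second sequence is again concentrated on $A$) is all that is needed.
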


\begin{fact}
\label{f14}
If $A$ is nonempty and compact, then $C(A)$ is compact.
\end{fact}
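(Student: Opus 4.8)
The plan is to show that $C(A)$ is a closed and bounded subset of $[0,+\infty)$ and then to invoke the Heine--Borel theorem. Boundedness is immediate. Since $A$ is compact it has finite diameter $\operatorname{diam}A$, and, fixing any $x\in A$ (which is possible because $A\ne\emptyset$), every $\alpha\in C(A)$ is realised as $\alpha=d(x,y)$ for some $y\in A$; hence $\alpha\le\operatorname{diam}A$. Thus $C(A)\subseteq[0,\operatorname{diam}A]$.

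The substance of the argument is closedness. I would take a sequence $(\alpha_k)$ in $C(A)$ converging to some $\alpha\in[0,+\infty)$ and show that $\alpha\in C(A)$. Fix an arbitrary $x\in A$. For each $k$ pick $y_k\in A$ with $d(x,y_k)=\alpha_k$; by compactness of $A$ some subsequence $(y_{k_j})$ converges to a point $y\in A$, and then $d(x,y)=\lim_j d(x,y_{k_j})=\lim_j\alpha_{k_j}=\alpha$. As $x\in A$ was arbitrary, this shows $\alpha\in C(A)$, so $C(A)$ is closed, and being also bounded it is compact. (Alternatively, closedness could be extracted from Corollary~\ref{c13} combined with von Neumann's theorem, but the direct argument above is shorter and in fact does not use the ambient compactness of $X$.)

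I do not expect a genuine obstacle here; the proof is short. The only points deserving a little care are that $A\ne\emptyset$ is needed in order to select the base point $x$, and that the compactness of $A$ is used twice — once to guarantee that $\operatorname{diam}A<\infty$, so that $C(A)$ is bounded, and once to extract the convergent subsequence $(y_{k_j})$ witnessing that the limit distance $\alpha$ is again realised within $A$.
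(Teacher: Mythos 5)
Your proof is correct and rests on the same ingredients as the paper's: compactness of $A$ and continuity of $y\mapsto d(x,y)$. The paper merely packages the argument more compactly by writing $C(A)=\bigcap_{x\in A}f_x(A)$ with $f_x(y)=d(x,y)$, so that $C(A)$ is an intersection of compact subsets of $\mathbb R$; your closed-and-bounded sequential argument is the unpacked version of exactly this.
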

\begin{proof}
Given any $x\in X$, the function $f_x:\,A\to[0,\,+\infty):\,y\mapsto d(x,\,y)$ is continuous and hence $f_x(A)$ is compact. The thesis follows from the equality $C(A)=\bigcap_{x\in A}f_x(A)$.
\end{proof}

\begin{fact}
\label{f15}
For any finite family of nonempty sets $\{A_i:\ i=1,\,\ldots,\,m\,\}$ the following inclusion holds
$$
\bigcap_{i=1}^mC(A_i)\ \subset\ C\left(\bigcup_{i=1}^mA_i\right).
$$
\end{fact}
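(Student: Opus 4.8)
The plan is to unwind the definition of the center of distances directly, with no machinery needed. Fix $\alpha\in\bigcap_{i=1}^m C(A_i)$; the goal is to verify the defining property of $C\bigl(\bigcup_{i=1}^m A_i\bigr)$, namely that for every $x\in\bigcup_{i=1}^m A_i$ there is a $y\in\bigcup_{i=1}^m A_i$ with $d(x,y)=\alpha$. So I would take an arbitrary such $x$. By definition of the union, $x\in A_j$ for some index $j\in\{1,\dots,m\}$. Since $\alpha\in C(A_j)$, the defining property of $C(A_j)$ supplies a point $y\in A_j$ with $d(x,y)=\alpha$. Because $A_j\subset\bigcup_{i=1}^m A_i$, this same $y$ lies in $\bigcup_{i=1}^m A_i$, which is exactly what had to be produced. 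Since $x$ was arbitrary, $\alpha\in C\bigl(\bigcup_{i=1}^m A_i\bigr)$.

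There is essentially no obstacle here: the inclusion is pure set-theoretic bookkeeping, and the only point requiring any care is keeping straight which index the chosen point belongs to. It is worth remarking that finiteness of the family is never used, so the same argument yields $\bigcap_{i\in I} C(A_i)\subset C\bigl(\bigcup_{i\in I} A_i\bigr)$ for an arbitrary index set $I$; and that the reverse inclusion genuinely fails in general, because a distance $\alpha$ realized within the union can be realized only by jumping from some $A_i$ to a different $A_j$, so I would not attempt to prove anything stronger.
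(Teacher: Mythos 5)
Your proof is correct and is essentially identical to the paper's: pick the index $j$ with $x\in A_j$, use $\alpha\in C(A_j)$ to produce $y\in A_j\subset\bigcup_i A_i$. Your side remarks (finiteness is never used; the reverse inclusion fails) also match the paper, which gives the counterexample $A_1=\{0,1\}$, $A_2=\{2,3\}$.
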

\begin{proof}
Let $\alpha\in\bigcap_{i=1}^mC(A_i)$. Take any $x\in\bigcup_{i=1}^mA_i$ and let $n$ be such that $x\in A_n$. Since $\alpha\in C(A_n)$, we can find $y\in A_n\subset\bigcup_{i=1}^mA_i$ such that $d(x,\,y)=\alpha$. Since $x$ was arbitrary, $\alpha\in C\left(\bigcup_{i=1}^mA_i\right)$.
\end{proof}
The inverse inclusion fails in general. It suffices to consider the sets $A_1=\{0,\,1\}$ and $A_2=\{2,\,3\}$ in $\mathbb R$.

\begin{fact}
\label{f16}
For any descending sequence $(B_n)$ of nonempty compact sets the following inclusion holds
$$
\bigcap_{n=1}^\infty C(B_n)\ \subset \ C\left(\bigcap_{n=1}^\infty B_n\right).
$$
\end{fact}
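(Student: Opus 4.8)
The plan is to reduce the claim to a single compactness (Cantor intersection) argument. Write $B := \bigcap_{n=1}^\infty B_n$. Since $(B_n)$ is a descending sequence of nonempty compact sets, $B$ is itself nonempty and compact, so $C(B)$ is well defined. Fix $\alpha \in \bigcap_{n=1}^\infty C(B_n)$; I must show $\alpha \in C(B)$, that is, for every $x \in B$ there is some $y \in B$ with $d(x,y) = \alpha$.

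So fix an arbitrary $x \in B$. For each $n$ we have $x \in B_n$ and $\alpha \in C(B_n)$, hence there is a point $y_n \in B_n$ with $d(x, y_n) = \alpha$. All the $y_n$ lie in the compact set $B_1$, so by passing to a subsequence we may assume $y_{n_k} \to y$ for some $y \in B_1$. The heart of the argument is to verify that $y \in B$: fix $m \in \mathbb{N}$; for every index $n_k \ge m$ we have $y_{n_k} \in B_{n_k} \subseteq B_m$ by the nesting, and since $B_m$ is closed the limit $y$ lies in $B_m$ as well. As $m$ was arbitrary, $y \in \bigcap_m B_m = B$. Finally, continuity of the metric gives $d(x,y) = \lim_{k\to\infty} d(x, y_{n_k}) = \alpha$, and since $x \in B$ was arbitrary this shows $\alpha \in C(B)$.

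There is no genuine obstacle here; the only point requiring a little care is the verification that the limit $y$ of the extracted subsequence belongs to every $B_m$, which is exactly where both the nesting $B_{n+1} \subseteq B_n$ and the closedness of each $B_m$ are used. It is perhaps worth noting that compactness enters twice: once to guarantee that $B$ is nonempty (so that $C(B)$ is defined), and once to extract the convergent subsequence $(y_{n_k})$.
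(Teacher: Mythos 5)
Your proof is correct and follows essentially the same route as the paper's: choose $y_n\in B_n$ with $d(x,y_n)=\alpha$, extract a convergent subsequence using compactness of $B_1$, and use the nesting plus closedness to place the limit in the intersection. The only (welcome) addition is your explicit remark that the intersection is nonempty by the Cantor intersection property, which the paper leaves implicit.
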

\begin{proof}
Take any $\alpha\in\bigcap C(B_n)$ and any $x\in\bigcap B_n$. Then for every $n\in\mathbb N$ there is an $y_n\in B_n$ such that $d(x,\,y_n)=\alpha$. Since $B_1$ is compact, we may assume $y_n\to y$. It must be $y\in\bigcap B_n$ because $(y_k)_{k\ge n}\subset B_n$ and all $B_n$'s are compact. Clearly, $d(x,\,y)=\lim_nd(x,\,y_n)=\alpha$ and hence $\alpha \in C\left(\bigcap B_n\right)$.
\end{proof}

The inverse inclusion does not have to hold as the example of $B_n:=\{0,\,1\}\cup\{1+\frac1k:\ k\ge n\,\}$ in $\mathbb R$ shows. Moreover, the assumption of compactness of all $B_n$'s cannot be dropped in the general case as the example  of $B_n:=\{0\}\cup\{e_k:\ k\ge n\,\}$ in the space $c_0$ shows. We do not know if the assumption of compactness is necessary in $\mathbb R$ when $\bigcap B_n$ is required additionally.

\section{The case of subsums}

We start the section with two obvious observations regarding the center of distances for subsets of $\mathbb R$.

\begin{fact}
\label{f21}
If $A\subset[0,\,+\infty)$ and $0\in A$, then $C(A)\subset A$.
\end{fact}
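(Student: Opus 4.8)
The plan is to exploit the defining condition of the center of distances at the single most convenient point of $A$, namely $0$. Recall that $\alpha\in C(A)$ means that for \emph{every} $x\in A$ there is some $y\in A$ with $d(x,y)=\alpha$; in $\mathbb R$ this distance is just $|x-y|$. Since $0\in A$ by hypothesis, we are entitled to apply this condition with $x=0$.

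First I would fix an arbitrary $\alpha\in C(A)$ and produce, from the case $x=0$, an element $y\in A$ with $|0-y|=\alpha$, that is, $|y|=\alpha$. The next step uses the sign restriction: because $A\subset[0,+\infty)$, we have $y\ge 0$, hence $|y|=y$ and therefore $y=\alpha$. Thus $\alpha=y\in A$. Since $\alpha$ was an arbitrary element of $C(A)$, this yields $C(A)\subset A$, as claimed.

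There is essentially no obstacle here; the only thing to be slightly careful about is that the argument genuinely needs \emph{both} hypotheses. If $0\notin A$ one cannot test the condition at $x=0$, and if $A$ is allowed to meet $(-\infty,0)$ then $|y|=\alpha$ no longer forces $y=\alpha$ (it could be that $-y=\alpha$). So the proof is just the one-line observation above, organised around the evaluation $x=0$.
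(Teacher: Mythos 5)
Your argument is correct and is exactly the standard one: testing the defining condition of $C(A)$ at the point $x=0$ and using $A\subset[0,+\infty)$ to conclude $y=\alpha\in A$. The paper states this fact as an obvious observation without proof, and your one-line argument is precisely the intended justification.
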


\begin{fact}
\label{f22}
If $q\in\mathbb R$, then $C(q+A)=C(A)$ and $C(qA)=|q|C(A)$.
\end{fact}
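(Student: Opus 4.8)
The plan is to exploit the fact that both of the maps $x\mapsto q+x$ and $x\mapsto qx$ transform the Euclidean distance on $\mathbb R$ in a completely controlled way: the translation preserves it, while the scaling multiplies it by $|q|$. Since the defining condition for $\alpha\in C(A)$ only refers to distances between pairs of points of $A$, each of these transformations translates verbatim into the corresponding statement about $C$.

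First I would settle the translation identity. Fix $q\in\mathbb R$ and $\alpha\in[0,+\infty)$. Every element of $q+A$ has the form $q+x$ with $x\in A$, and $|(q+x)-(q+y)|=|x-y|$ for all $x,y\in A$. Hence the statement ``for every $q+x\in q+A$ there is $q+y\in q+A$ with $|(q+x)-(q+y)|=\alpha$'' is literally the same as ``for every $x\in A$ there is $y\in A$ with $|x-y|=\alpha$''. Therefore $\alpha\in C(q+A)$ if and only if $\alpha\in C(A)$, which gives $C(q+A)=C(A)$.

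Next the scaling identity. If $q=0$, then $qA=\{0\}$, so $C(qA)=\{0\}$, while $|q|C(A)=0\cdot C(A)=\{0\}$ since $C(A)$ is nonempty (it contains $0$); thus the identity holds in this degenerate case. Now assume $q\neq0$. Every element of $qA$ is $qx$ with $x\in A$, and $|qx-qy|=|q|\,|x-y|$ for all $x,y\in A$. Consequently, for $\alpha\ge0$ the condition ``for every $x\in A$ there is $y\in A$ with $|qx-qy|=\alpha$'' is equivalent to ``for every $x\in A$ there is $y\in A$ with $|x-y|=\alpha/|q|$'', i.e.\ to $\alpha/|q|\in C(A)$, i.e.\ to $\alpha\in|q|C(A)$. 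Hence $C(qA)=|q|C(A)$.

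I expect no genuine obstacle here; the statement is essentially a reformulation of the elementary behaviour of $|\cdot|$ under affine maps. The only point deserving a moment's care is the degenerate case $q=0$, where one simply notes that $|q|C(A)$ denotes the set $\{|q|\gamma:\gamma\in C(A)\}=\{0\}$, which is meaningful precisely because $C(A)\neq\emptyset$.
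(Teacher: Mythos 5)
Your proof is correct and is exactly the standard argument the paper has in mind when it states this fact without proof as an ``obvious observation'': translation preserves distances and scaling by $q$ multiplies them by $|q|$, so the defining condition for membership in the center of distances transfers verbatim. Your separate treatment of the degenerate case $q=0$, using that $C(A)\ne\emptyset$ because $0\in C(A)$, is a sensible extra touch of care.
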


The symbol $d(x,\,A)$ denotes the distance of the point $x$ from the set $A$, that is, the number $\inf_{y\in A}d(x,\,y)$.

\begin{fact}
\label{f23}
Given a convergent series $\sum a_n$ of positive terms and of sum $s$, the following inclusions are true
$$
\{0\}\cup\{a_n:\ n\in\mathbb N\,\}\ \subset \ C(E)\ \subset \ [0,\,\tfrac s2+d(\tfrac s2,\,E)]\cap E.
$$
\end{fact}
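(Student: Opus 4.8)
The plan is to treat the two inclusions separately; both turn out to be elementary, so the proof is essentially a short verification, with the only mildly nonobvious point being how the quantity $\tfrac s2+d(\tfrac s2,E)$ arises.

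For the left inclusion $\{0\}\cup\{a_n:n\in\mathbb N\}\subseteq C(E)$, I would first recall that $0\in C(A)$ for every nonempty $A$. To see that a fixed term $a_N$ lies in $C(E)$, take an arbitrary $x\in E$, say $x=\sum_{n\in A}a_n$ with $A\subseteq\mathbb N$. If $N\in A$, set $y:=\sum_{n\in A\setminus\{N\}}a_n$; if $N\notin A$, set $y:=\sum_{n\in A\cup\{N\}}a_n$. In either case $y\in E$ and $|x-y|=a_N$. Since $x\in E$ was arbitrary, $a_N\in C(E)$.

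For the right inclusion I would argue as follows. Every subsum is nonnegative and the empty subsum is $0$, so $E\subseteq[0,+\infty)$ and $0\in E$; hence Fact~\ref{f21} gives $C(E)\subseteq E$. It remains to bound $C(E)$ from above, and the key (elementary) observation is that $E\subseteq[0,s]$, because every subsum lies between the empty sum $0$ and the full sum $s$. Now fix $\alpha\in C(E)$ and an arbitrary $x\in E$; by definition there is $y\in E$ with $|x-y|=\alpha$. Since both $x$ and $y$ lie in $[0,s]$, we get $\alpha=|x-y|\le\max\{x,\,s-x\}=\tfrac s2+|x-\tfrac s2|$. As this holds for every $x\in E$, taking the infimum over $x\in E$ yields $\alpha\le\tfrac s2+d(\tfrac s2,E)$. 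Together with $C(E)\subseteq E$ this gives $C(E)\subseteq[0,\,\tfrac s2+d(\tfrac s2,E)]\cap E$.

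I do not expect a genuine obstacle here. The only step worth isolating is the identity $\max\{x,\,s-x\}=\tfrac s2+|x-\tfrac s2|$ together with the choice to pass to the infimum over all $x\in E$ rather than to a single nearest point — that is what converts the crude bound $|x-y|\le\max\{x,s-x\}$ into the sharp quantity $\tfrac s2+d(\tfrac s2,E)$. (Equivalently, one could use a point of $E$ realizing $d(\tfrac s2,E)$, which exists since $E$ is compact.)
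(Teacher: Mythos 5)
Your proof is correct and follows essentially the same route as the paper: the left inclusion is the standard toggle-the-term argument (which the paper omits as obvious), the containment $C(E)\subseteq E$ comes from Fact~\ref{f21}, and the upper bound rests on the same observation that any $\alpha\in C(E)$ satisfies $\alpha\le\max\{x,\,s-x\}=\tfrac s2+|x-\tfrac s2|$ for each $x\in E$. The only cosmetic difference is that you take the infimum of this bound over all $x\in E$, whereas the paper evaluates it at the two optimal witnesses $\max\{x\in E: x\le\tfrac s2\}$ and $\min\{x\in E: x\ge\tfrac s2\}$; the resulting quantity $\tfrac s2+d(\tfrac s2,E)$ is identical.
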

\begin{proof}
Both inclusions are rather easy to observe and we will prove the second one only. Setting $\alpha:=\max\{x\in E:\ x\le\frac s2\,\}$ and $\beta:=\min\{x\in E:\ x\ge\frac s2\,\}$, we get $\max_{x\in E}|\alpha-x|=s-\alpha$ and $\max_{x\in E}|\beta-x|=\beta$. Thus
$$
C(E)\ \subset\ [0,\,s-\alpha]\cap[0,\,\beta]\ =\ \bigl[0,\,\tfrac s2+\min\{\tfrac s2-\alpha,\,\beta-\tfrac s2\}\bigr]\ =\ [0,\,\tfrac s2+d(\tfrac s2,\,E)].
$$
The inclusion $C(E)\subset E$ follows from the Fact \ref{f21}.
\end{proof}

Our next observation is very easy as well and so its proof, based on the First Gap Lemma and on one of basic Kakeya's results \cite[Facts 21.9 and 21.13]{BFP}, will be leaved out.
\begin{fact}
\label{f24}
Given a convergent series $\sum a_n$ of positive terms and of sum $s$, the following statements are equivalent:
\begin{itemize}
\item[(i)] $C(E)$ is an inverval,
\item[(ii)] $C(E)\ =\ [0,\,\frac s2]$.
\item[(iii)] the series $\sum a_n$ is slowly convergent.
\end{itemize}
\end{fact}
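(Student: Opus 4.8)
The plan is to prove the chain of implications (iii) $\Rightarrow$ (ii) $\Rightarrow$ (i) $\Rightarrow$ (iii), which yields the full equivalence. For (iii) $\Rightarrow$ (ii): if $\sum a_n$ is slowly convergent then, by the quoted result that $E$ is a single interval precisely when the series is slowly convergent, and since $0,s\in E\subseteq[0,s]$, we get $E=[0,s]$; plugging this into Fact~\ref{f23} (where now $d(\tfrac s2,E)=0$) gives $C(E)\subseteq[0,\tfrac s2]$, while the reverse inclusion $[0,\tfrac s2]\subseteq C(E)$ is a one-line check, since for $\alpha\in[0,\tfrac s2]$ and $x\in[0,s]$ the point $x+\alpha$ lies in $[0,s]$ when $x\le\tfrac s2$ and the point $x-\alpha$ lies in $[0,s]$ when $x\ge\tfrac s2$. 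The implication (ii) $\Rightarrow$ (i) is immediate, as $[0,\tfrac s2]$ is an interval.

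For (i) $\Rightarrow$ (iii) I would argue by contraposition. Assume $\sum a_n$ is not slowly convergent, so that $a_k>r_k$ for some index $k$. Then the First Gap Lemma tells us that $(r_k,a_k)$ is an $E$-gap, and $r_k>0$ because $\sum a_n$ is an infinite series of positive terms; hence $(r_k,a_k)$ is a nonempty open subinterval of $(0,a_k)$ that is disjoint from $E$. On the other hand, Fact~\ref{f23} gives $0,a_k\in C(E)$ and Fact~\ref{f21} gives $C(E)\subseteq E$. Picking any $\alpha\in(r_k,a_k)$ we get $0<\alpha<a_k$ with $0,a_k\in C(E)$ but $\alpha\notin E\supseteq C(E)$. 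Thus $C(E)$ contains $0$ and $a_k$ but not the intermediate point $\alpha$, so it is not an interval; by contraposition, (i) implies (iii).

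The only step calling for any care — presumably the reason the authors invoke a named lemma for what is otherwise pure bookkeeping — is pinning down \emph{where} the gap is: an arbitrary $E$-gap could sit entirely above $\sup C(E)$ and obstruct nothing, so one genuinely needs a gap lying strictly below some already-available element of $C(E)$, and the combination of the First Gap Lemma with the membership $a_k\in C(E)$ supplies exactly such a gap (namely $(r_k,a_k)$ with $a_k\in C(E)$). Everything else follows mechanically from Facts~\ref{f21} and~\ref{f23} together with the known description of when $E$ is a single interval.
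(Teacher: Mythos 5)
Your proof is correct and follows exactly the route the authors indicate (they omit the proof but state it rests on the First Gap Lemma and Kakeya's characterization of when $E$ is an interval): Kakeya's result plus Fact~\ref{f23} gives (iii)~$\Rightarrow$~(ii), and the First Gap Lemma combined with $a_k\in C(E)\subseteq E$ gives (i)~$\Rightarrow$~(iii) by contraposition. No gaps.
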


\begin{fact}
\label{f25}
Given a convergent series $\sum a_n$ of positive terms and a positive integer $k$, the following inclusions hold
$$
C(F_k)\ \ \subset\ \ C(F_{k+1})\ \ \subset\ \ C(E).
$$
\end{fact}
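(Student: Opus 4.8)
The plan is to isolate the single principle underlying both inclusions and then apply it twice. The principle is: \emph{for any nonempty sets $A,B\subset\mathbb R$ one has $C(A)\subset C(A+B)$}, where $A+B=\{a+b:\,a\in A,\ b\in B\}$. To prove it, fix $\alpha\in C(A)$ and an arbitrary $x\in A+B$, and write $x=a+b$ with $a\in A$ and $b\in B$. Since $\alpha\in C(A)$, there is an $a'\in A$ with $|a-a'|=\alpha$; then $a'+b\in A+B$ and $|x-(a'+b)|=|a-a'|=\alpha$. As $x$ was arbitrary, $\alpha\in C(A+B)$.

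Next I would record the two elementary additive decompositions that feed into this principle. Splitting a subset $A\subset\{1,\ldots,k+1\}$ according to whether $k+1\in A$ gives $F_{k+1}=F_k\cup(a_{k+1}+F_k)=F_k+\{0,\,a_{k+1}\}$. And the decomposition $E=F_j+E_j$ already noted in the introduction, applied with $j=k+1$, gives $E=F_{k+1}+E_{k+1}$.

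Finally I would combine them. Applying the principle with $A=F_k$ and $B=\{0,\,a_{k+1}\}$ yields $C(F_k)\subset C(F_{k+1})$, and applying it again with $A=F_{k+1}$ and $B=E_{k+1}$ yields $C(F_{k+1})\subset C(E)$; chaining the two inclusions is the assertion. (The same step iterated, or a direct application of the principle to $F_j=F_k+\{0,a_{k+1}\}+\cdots+\{0,a_j\}$, also gives $C(F_k)\subset C(F_j)$ for every $j\ge k$, and likewise $C(F_k)\subset C(E)$ without going through $F_{k+1}$.)

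I do not expect a genuine obstacle here: the content lies entirely in choosing the correct additive decompositions of $F_{k+1}$ and of $E$, both of which are trivial and the second of which is already stated in the paper, so the proof should be only a few lines long, in the spirit of Facts~\ref{f21}--\ref{f24}.
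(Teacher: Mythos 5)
Your proposal is correct and is essentially the paper's own argument: the paper proves both inclusions by exactly the replacement you describe (writing $g\in F_{k+1}$ as $f$ or $f+a_{k+1}$ with $f\in F_k$, and writing $x\in E$ as $f_x+e_x$ with $f_x\in F_k$, $e_x\in E_k$, then shifting only the $F_k$-component). The only difference is that you abstract this common mechanism into the general sumset lemma $C(A)\subset C(A+B)$, which cleanly unifies the two verifications the paper carries out separately; the lemma and both decompositions $F_{k+1}=F_k+\{0,\,a_{k+1}\}$ and $E=F_{k+1}+E_{k+1}$ are valid as stated.
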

\begin{proof}
Take any $z\in C(F_k)$. For every $f\in F_k$ there is an $\tilde{f}\in F_k$ such that $|f-\tilde{f}|=z$. Now, take any $g\in F_{k+1}$ and any set $A\subset\{1,\,\ldots,\,k+1\,\}$ such that $g=\sum_{n\in A}a_n$. Define
$$
f\ :=\ \sum_{\substack{n\in A\\ n\le k}}a_n\hspace{.3in}\text{and}\hspace{.3in} \hat{g}\ :=\ \begin{cases} \tilde{f}\ \ \ \ &\text{if $g=f$}\\ \tilde{f}+a_{k+1} &\text{otherwise.} \end{cases}
$$
Clearly, $\hat{g}\in F_{k+1}$ and $|g-\hat{g}|=z$ which proves the first inclusion.

In order to prove the second one, take any $z\in C(F_k)$ again. Given an $x\in E$, there are $f_x\in F_k$ and $e_x\in E_k$ such that $x=f_x+e_x$ \cite[Fact 21.5]{BFP}. Since $z\in C(F_k)$, we can find $f\in F_k$ such that $|f-f_x|=z$. Setting $y:=f+e_x$, we get $y\in E$ and $|x-y|=z$ which completes the proof of the second inclusion.
\end{proof}

Is it possible that $C(F)=\bigcup_nC(F_n)$ ? Yes, it is sometimes possible as we will show at the very end of this note. However, the equality does not have to hold always which can be seen from our next elementary observation.
\begin{fact}
\label{f26}
If $E(a_n)$ is a Cantorval containing a middle interval $I$ of length $|I|>\frac s2$, then $\frac12[s-|I|,\,|I|]\subset C(E)$.
\end{fact}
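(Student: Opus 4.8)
The plan is to use the symmetry of $E$ about its centre $\tfrac s2$ together with the size of $I$ to place, for each point of $E$, a partner at the prescribed distance inside $I$ itself. First observe that any subinterval of $E$ of length exceeding $\tfrac s2$ must contain $\tfrac s2$ — otherwise it would lie wholly in $[0,\tfrac s2)$ or in $(\tfrac s2,s]$ and hence be too short — so $\tfrac s2\in I$. We may enlarge $I$ to the maximal subinterval of $E$ containing $\tfrac s2$: this only enlarges the target set $\tfrac12[s-|I|,|I|]$, since $[s-\ell_1,\ell_1]\subseteq[s-\ell_2,\ell_2]$ whenever $\ell_1\le\ell_2$. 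Because $E=s-E$, this maximal interval is symmetric about $\tfrac s2$ (its reflection is an interval of the same length through $\tfrac s2$, hence equal to it), so $I=\bigl[\tfrac{s-\ell}{2},\,\tfrac{s+\ell}{2}\bigr]$ with $\ell:=|I|>\tfrac s2$. Now $\tfrac12[s-\ell,\ell]=\{\alpha:\ \tfrac{s-\ell}{2}\le\alpha\le\tfrac\ell2\}$, a nondegenerate interval precisely because $\ell>\tfrac s2$, and it suffices to fix an arbitrary such $\alpha$ and prove $\alpha\in C(E)$.

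The key step is: for every $x\in E$ (so $0\le x\le s$) produce $y\in I\subseteq E$ with $|x-y|=\alpha$, by setting $y:=x+\alpha$ if $x\le\tfrac{s+\ell}{2}-\alpha$ and $y:=x-\alpha$ otherwise. In the first case $x+\alpha\le\tfrac{s+\ell}{2}$ by the case hypothesis and $x+\alpha\ge\alpha\ge\tfrac{s-\ell}{2}$ since $x\ge0$ and $\alpha\ge\tfrac{s-\ell}{2}$, so $y\in I$. In the second case $x-\alpha>\tfrac{s+\ell}{2}-2\alpha\ge\tfrac{s-\ell}{2}$ since $\alpha\le\tfrac\ell2$, and $x-\alpha\le s-\tfrac{s-\ell}{2}=\tfrac{s+\ell}{2}$ since $x\le s$ and $\alpha\ge\tfrac{s-\ell}{2}$, so again $y\in I$. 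Either way $|x-y|=\alpha$; thus $\alpha\in C(E)$, and since $\alpha$ ranged over $\tfrac12[s-\ell,\ell]$ we obtain $\tfrac12[s-|I|,|I|]\subseteq C(E)$.

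I do not expect any real obstacle. The only point worth a line is the symmetry of the maximal interval of $E$ through $\tfrac s2$, which is immediate from $E=s-E$ and maximality; the rest is the two elementary estimates above. Note how the membership $\tfrac{s-\ell}{2}\le\alpha\le\tfrac\ell2$ is used: the lower bound $\alpha\ge\tfrac{s-\ell}{2}$ (which is automatically $\ge0$ as $\ell\le s$) keeps $x\pm\alpha$ from overshooting $I$, the upper bound $\alpha\le\tfrac\ell2$ forces the two cases to overlap so that at least one of $x\pm\alpha$ always lies in $I$, and the strict inequality $|I|>\tfrac s2$ serves only to guarantee that $\tfrac12[s-|I|,|I|]$ is an interval rather than a single point. (In fact the argument uses nothing about Cantorvals: it works for any set of subsums possessing a subinterval of length greater than $\tfrac s2$.)
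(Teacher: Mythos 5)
Your proof is correct. The paper actually states Fact \ref{f26} without any proof (it is offered as an ``elementary observation''), so there is nothing to compare against; your argument --- normalizing $I$ via the symmetry $E=s-E$ to the symmetric maximal interval $\bigl[\tfrac{s-\ell}{2},\tfrac{s+\ell}{2}\bigr]$ about $\tfrac s2$, and then placing $x+\alpha$ or $x-\alpha$ inside $I$ according to the case split, with the bound $\alpha\le\tfrac\ell2$ making the two cases cover all of $[0,s]$ --- is a complete and surely the intended justification. Your closing remark that the Cantorval hypothesis is irrelevant (only the existence of a subinterval of $E$ of length greater than $\tfrac s2$ is used) is also accurate.
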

Such Cantorvals do exist. For example, it suffices to take the Ferens type series $\mathcal{F}(2,5;\frac1{16})$ (see \cite[Lemma 7]{BP}). Since  the set $\bigcup_nC(F_n)$ is countable for any series $\sum a_n$, the equality $C(E)=\bigcup_nC(F_n)$ does not hold for any series satisfying the assumptions of the Fact \ref{f26}.
\begin{op}
\label{op1}
For which series $\sum a_n$ does the equality $C(E)=\overline{\bigcup_{n=1}^\infty C(F_n)}$ hold?
\end{op}

\begin{fact}
\label{f27}
Given a positive integer $k$ and a convergent series  $\sum a_n$ of positive terms, the following inclusions hold
$$
C(E_{k+1})\ \subset\ C(E_k)\ \subset\ C(E).
$$
\end{fact}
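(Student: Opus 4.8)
The plan is to deduce both inclusions from two structural identities: the one already recorded in the excerpt, $E=F_k+E_k$, and its one–step analogue for remainder series, together with the elementary Facts \ref{f15} and \ref{f22}. First I would isolate the key observation that $E_k$ is the achievement set of the remainder series $\sum_{n>k}a_n$, so applying the identity ``$E=F_1+E_1$'' to that series gives
$$
E_k\ =\ \{0,\,a_{k+1}\}\,+\,E_{k+1}\ =\ E_{k+1}\,\cup\,(a_{k+1}+E_{k+1}),
$$
where both $E_{k+1}$ and $a_{k+1}+E_{k+1}$ are genuinely subsets of $E_k$, directly from the definition of $E_k$.

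For the first inclusion $C(E_{k+1})\subset C(E_k)$ I would then argue purely formally: by Fact \ref{f22} we have $C(a_{k+1}+E_{k+1})=C(E_{k+1})$, and applying Fact \ref{f15} to the two–element family $\{E_{k+1},\,a_{k+1}+E_{k+1}\}$ yields
$$
C(E_{k+1})\ =\ C(E_{k+1})\,\cap\,C(a_{k+1}+E_{k+1})\ \subset\ C\bigl(E_{k+1}\cup(a_{k+1}+E_{k+1})\bigr)\ =\ C(E_k).
$$

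For the second inclusion $C(E_k)\subset C(E)$ there are two equally short routes. The cleanest is to iterate the first inclusion: since $E_0=E$ (the $0$-th remainder is the whole series), the first inclusion used with indices $0,1,\ldots,k-1$ gives the chain $C(E_k)\subset C(E_{k-1})\subset\cdots\subset C(E_1)\subset C(E_0)=C(E)$. Alternatively one can repeat the argument of Fact \ref{f25}, mutatis mutandis with the roles of $F_k$ and $E_k$ exchanged: given $z\in C(E_k)$ and $x\in E$, write $x=f_x+e_x$ with $f_x\in F_k$ and $e_x\in E_k$ (possible because $E=F_k+E_k$), choose $e\in E_k$ with $|e_x-e|=z$, and note that $y:=f_x+e\in F_k+E_k=E$ satisfies $|x-y|=z$.

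I do not expect a genuine obstacle in this statement. The only point requiring a moment's care is the very first step, namely recognising that $E_k$ splits as the union of exactly two translated copies of $E_{k+1}$ (equivalently, checking that both translates lie inside $E_k$ so that their union is all of $E_k$); once that is in place, everything is a formal consequence of facts already proved, with no fractions or limiting arguments involved.
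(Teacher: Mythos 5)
Your proposal is correct and follows essentially the same route as the paper: the paper's proof of the first inclusion is precisely your chain $C(E_{k+1})=C(E_{k+1})\cap C(a_{k+1}+E_{k+1})\subset C(E_k)$ via Facts \ref{f22} and \ref{f15} applied to the decomposition $E_k=E_{k+1}\cup(a_{k+1}+E_{k+1})$, and the paper disposes of the second inclusion with the same ``analogous'' decomposition $E=\bigcup_{f\in F_k}(f+E_k)$ that underlies both of your suggested routes.
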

\begin{proof}
We are going to prove the left inclusion only, because the right one can be proven analogously:
$$
C(E_{k+1}) = C(E_{k+1})\cap C(E_{k+1}) \overset{\text{Fact \ref{f22}}}{=} C(E_{k+1})\cap C(a_{k+1}+E_{k+1})\overset{\text{Fact \ref{f15}}}{\subset} C(E_k).
$$
\end{proof}

\section{The case of a fast convergent series}

It is well known that given a fast convergent series $\sum a_n$ and an $x\in E$, there is a unique set of indices $I_x\subset\mathbb N$ such that $x=\sum_{n\in I_x}a_n$ \cite[Fact 3.3]{P}. In particular,
\begin{equation}
\label{e31}
x\in E\setminus F \hspace{.2in}\text{if and only if} \hspace{.2in} \overline{\overline{I_x}}=\infty.
\end{equation}

\begin{fact}
\label{f31}
If a series $\sum a_n$ is fast convergent, then $C(E)\subset F\cap[0,\,a_1]$.
\end{fact}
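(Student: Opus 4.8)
The plan is to peel off the easy half $C(E)\subset[0,a_1]$ and then prove $C(E)\subset F$ by showing that an element of $C(E)$ cannot have infinite support. For the first half I would invoke Fact~\ref{f23}, which already gives $C(E)\subset[0,\tfrac s2+d(\tfrac s2,E)]\cap E$: when $\sum a_n$ is fast convergent, $(r_1,a_1)$ is an $E$-gap by the First Gap Lemma and $\tfrac s2=\tfrac12(a_1+r_1)$ is its midpoint (indeed $a_1>r_1$ says exactly $\tfrac s2<a_1$), so $d(\tfrac s2,E)=a_1-\tfrac s2$ and hence $\tfrac s2+d(\tfrac s2,E)=a_1$. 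Together with $C(E)\subset E$ (Fact~\ref{f21}) it remains to prove $C(E)\subset F$, and by the dichotomy \eqref{e31} this is equivalent to: no $\alpha\in E$ with $I_\alpha$ infinite belongs to $C(E)$.

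Assume, toward a contradiction, that $\alpha\in C(E)$ and $I_\alpha$ is infinite; put $m:=\min I_\alpha$ and $\sigma:=\sum_{n=1}^{m-1}a_n$ (the largest element of $F_{m-1}$, equal to $0$ if $m=1$). Fix a large index $M\ge m$, set $T:=I_\alpha\cap(m,M]$, and form the finite subsum $x:=\sum_{n=1}^{m}a_n+\sum_{n\in T}a_n\in F\subset E$. The supports of $x$ and of $\alpha$ overlap precisely in $\{m\}\cup T$, so adding them double-counts those terms and yields $x+\alpha\ge\sum_{n=1}^{m-1}a_n+2a_m=s+(a_m-r_m)>s$ by fast convergence; hence $x+\alpha\notin E$. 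Since $\alpha\in C(E)$, the only candidate partner of $x$ is therefore $y:=x-\alpha$, and cancelling the overlapping terms gives $y=\sigma-\beta'$, where $\beta':=\sum_{n\in I_\alpha,\,n>M}a_n>0$ (because $I_\alpha$ is infinite) and $\beta'\le r_M$.

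To finish I would choose $M$ so large that $r_M<a_{m-1}-r_{m-1}$, which is possible since $a_{m-1}>r_{m-1}$ (fast convergence again). If $m=1$ then $y=\sigma-\beta'=-\beta'<0\notin E$, done. If $m\ge2$, write $\sigma':=\sum_{n=1}^{m-2}a_n$; then $\beta'\le r_M<a_{m-1}-r_{m-1}$ forces $y\in(\sigma'+r_{m-1},\,\sigma)$, and this interval contains no point of $E$: any $z\in E$ lying in it would satisfy $\sum_{n\in I_z\cap[1,m-1]}a_n\ge z-r_{m-1}>\sigma'$, but monotonicity of $(a_n)$ makes $\sigma$ the only subsum of $a_1,\ldots,a_{m-1}$ exceeding $\sigma'$, whence $z\ge\sigma$, a contradiction. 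In either case $y\notin E$, contradicting $\alpha\in C(E)$, and we conclude $C(E)\subset F\cap[0,a_1]$.

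The step I expect to be the real obstacle is the construction of $x$ — specifically the role of the auxiliary block $T$. With $T=\varnothing$, i.e. $x=\sum_{n=1}^m a_n$, the partner is forced to be $\sigma-(\alpha-a_m)$, but $\alpha-a_m$ can exceed $a_{m-1}-r_{m-1}$, so this $y$ may honestly lie in the copy $\sigma'+E_{m-1}$ of $E_{m-1}$ rather than in a gap, and no contradiction follows. Folding an initial segment $T$ of $I_\alpha\setminus\{m\}$ into $x$ is exactly what replaces the uncontrollable overshoot $\alpha-a_m$ by a tail $\beta'\le r_M$ that can be pushed below the fixed gap length $a_{m-1}-r_{m-1}$. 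Everything else is routine manipulation of index sets, using only that $(a_n)$ is nonincreasing and the two inequalities $a_m>r_m$ and $a_{m-1}>r_{m-1}$ supplied by fast convergence.
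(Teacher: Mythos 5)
Your proof is correct, and its core construction is genuinely different from --- essentially the mirror image of --- the one in the paper. Both arguments reduce the statement, via Fact~\ref{f23} and the dichotomy \eqref{e31}, to showing that no $\alpha\in E$ with infinite $I_\alpha$ can belong to $C(E)$; your computation that $\tfrac s2+d(\tfrac s2,E)=a_1$ merely makes explicit what the paper leaves implicit when it cites Fact~\ref{f23}. The real difference is the choice of witness. For $m=\min I_\alpha$ the paper takes the \emph{large}, infinitely supported point $x=a_k+\sum_{n\ge m,\,n\notin I_\alpha}a_n$, where $k$ is the first index of $I_\alpha$ with $a_k<a_{m-1}-r_{m-1}$; then $x+\alpha=r_{m-1}+a_k$ lands in the gap $(r_{m-1},a_{m-1})$, while $x-\alpha$ is asserted to be negative. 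You take the \emph{small}, finite witness $x=\sum_{n\le m}a_n+\sum_{n\in T}a_n$, push $x+\alpha$ past $s$ using the doubled term $a_m$ together with $a_m>r_m$, and drop $x-\alpha=\sigma-\beta'$ into the translated gap $(\sigma'+r_{m-1},\,\sigma)$ just below $\sigma=\sum_{n<m}a_n$. The two constructions correspond under $x\mapsto s-x$, your tail $\beta'\le r_M$ playing the role of the paper's single small term $a_k$: in both cases a positive quantity must be squeezed under the fixed gap length $a_{m-1}-r_{m-1}$, which is exactly the obstruction you diagnose in your closing paragraph. What your route buys: the witness is a finite subsum; the case $m=1$ is handled uniformly rather than excluded via $\alpha<a_1$; the gap you land in is checked by a self-contained two-line computation; and your ``easy side'' $x+\alpha\ge s+(a_m-r_m)>s$ is more robust than the paper's corresponding one-line claim $x-\alpha<r_m-a_{m-1}<0$, which, when $k=m$ (i.e.\ $a_m<a_{m-1}-r_{m-1}$) and the tail $\alpha-a_m$ is small, reads $r_m-2(\alpha-a_m)<r_m-a_{m-1}$ and needs additional justification as stated.
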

\begin{proof}
It suffices to show that $(E\setminus F)\cap[0,\,a_1]\cap C(E)=\emptyset$, because of \eqref{e31} and the Fact \ref{f23}. Take any $\alpha\in(E\setminus F)\cap[0,\,a_1]$. We search for an $x\in E$ such that $x+\alpha \not\in E$ and $x-\alpha\not\in E$. Let $I_\alpha$ be the unique set of indices such that $\alpha=\sum_{n\in I_\alpha}a_n$. Define $m:=\min I_\alpha$. Since $\alpha<a_1$, we get $m\ge2$. Furhter, define $k:=\min\{n\in I_\alpha:\ a_n<a_{m-1}-r_{m-1}\,\}$, $I:=\{k\}\cup\mathbb N\setminus\bigl(\{1,\,2,\,\ldots,\,m-1\}\cup I_\alpha\bigr)$ and $x:=\sum_{n\in I}a_n$. Clearly, $x-\alpha<r_m-a_{m-1}<0$ and hence $x-\alpha\not\in E$. Finally, we get $r_{m-1}<r_{m-1}+a_k=x+\alpha<a_{m-1}$ where the last inequality is a consequence of our choice of $k$. Thus, $x+\alpha\not\in E$.
\end{proof}

The next observation is very similar to the Third Gap Lemma (see \cite[Lemma 2.4]{BGM} or \cite[Lemma 4]{BGFPS}) and therefore we like to call it the Fourth Gap Lemma.

\begin{lem}
\label{l32}
Let $\sum a_n$ be any fast convergent series and let $l>0$ be the length of any gap of $E(a_n)$. Then the most left of all $E$-gaps of length $l$ is exactly the gap $(r_k,\,a_k)$ where $k=\max\{ n:\ a_n-r_n=l\,\}$.
\end{lem}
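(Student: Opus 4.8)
The plan is to reduce the statement, by way of the Second Gap Lemma, to an explicit understanding of how the elements of $F_K$ are spaced, where $K:=\max\{n:\ a_n\ge l\}$. First I would check that $K$ makes sense: since $E=E_1\cup(a_1+E_1)$ with the two pieces of diameter $r_1<a_1$ separated by the gap $(r_1,a_1)$, every $E$-gap is shorter than $a_1$; in particular $l<a_1$ and $\{n:\ a_n\ge l\}$ is a nonempty finite set. Two consequences of fast convergence will be used throughout: for each $m\le K$ one has $a_m>r_m\ge a_{m+1}+\dots+a_K$, and every element of $E$, hence every element of $F_K$, has a unique representation as a subsum of $(a_n)$.

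The technical core is a formula for the predecessor of a point of $F_K$: if $v\in F_K$ is positive and $v=\sum_{i\in A}a_i$ is its representation, with $m:=\max A$, then the largest element of $F_K$ smaller than $v$ is $u:=v-a_m+\sum_{m<i\le K}a_i$, so that $v-u=a_m-r_m+r_K$. The relations $u\in F_K$ and $u<v$ are immediate from the bounds $a_m>a_{m+1}+\dots+a_K$, so the real work — and the step I expect to be the main obstacle — is to show that no subsum of $a_1,\dots,a_K$ lies strictly between $u$ and $v$. For this I would first record the ``greedy from the bottom'' comparison rule: $\sum_{i\in A}a_i<\sum_{i\in B}a_i$ iff $\min(A\,\triangle\,B)\in B$ (a three-line consequence of $a_p>a_{p+1}+\dots+a_K$), and then examine a hypothetical intermediate subsum $w=\sum_{i\in B}a_i$ according to where $\min(A\triangle B)$ sits relative to $m$; in every case one is forced to $w\le u$, a contradiction.

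Granting the predecessor formula, the lemma follows in a few lines. Let $(a,b)$ be any $E$-gap with $b-a=l$. By the Second Gap Lemma, $b\in F_K$ and $a=u+r_K$, where $u$ is the $F_K$-predecessor of $b$; writing $b=\sum_{i\in A}a_i$ and $m:=\max A$, the formula gives $l=b-a=(b-u)-r_K=a_m-r_m$ and $a=u+r_K=(b-a_m)+r_m\ge r_m$. Thus $\{n:\ a_n-r_n=l\}$ is nonempty (it contains $m$), so $k:=\max\{n:\ a_n-r_n=l\}$ is well defined; since $m\le k$ and $(r_n)$ is strictly decreasing, $a\ge r_m\ge r_k$, so no $E$-gap of length $l$ has left endpoint below $r_k$, and retracing the equalities shows that the only $E$-gap of length $l$ with left endpoint exactly $r_k$ is $(r_k,a_k)$. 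Finally, since $a_k>r_k$, the First Gap Lemma says $(r_k,a_k)$ is an $E$-gap, and its length is $a_k-r_k=l$ by the choice of $k$. Hence $(r_k,a_k)$ is an $E$-gap of length $l$ lying strictly to the left of every other $E$-gap of length $l$, which is exactly the assertion.
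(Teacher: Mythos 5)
Your proof is correct, and its endgame is the same as the paper's: both arguments come down to the fact that, for a fast convergent series, every internal $E$-gap is of the form $(b-a_m+r_m,\,b)$ with $b\in F$ and $m=\max I_b$, so that every gap's length equals $a_m-r_m$ and its left endpoint is $b-a_m+r_m\ge r_m\ge r_k$. The difference is in how that structural fact is obtained. The paper simply cites it (Corollary II.3.5 of \cite{P}, Lemma 3 of \cite{BFP2}) and then runs a two-line contradiction: a gap $(\alpha,\beta)$ of length $l$ to the left of $(r_k,a_k)$ would have $\beta<r_k$, hence $I_\beta\subset\{k+1,k+2,\dots\}$ and $m_\beta>k$, so $l=a_{m_\beta}-r_{m_\beta}$ contradicts the maximality of $k$. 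You instead rederive the fact from the Second Gap Lemma by computing the $F_K$-predecessor of $v=\sum_{i\in A}a_i$ explicitly as $v-a_m+(r_m-r_K)$; your ``greedy'' comparison rule $\sum_{i\in A}a_i<\sum_{i\in B}a_i\iff\min(A\,\triangle\,B)\in B$ is valid under fast convergence, and the three-case analysis on the position of $\min(A\,\triangle\,B)$ relative to $m$ does close in each case (the case $\min(A\,\triangle\,B)>m$ being vacuous since $m=\max A$). So your version is longer but self-contained relative to the gap lemmas stated in the paper's introduction, at the cost of reproving a fact the authors import from the literature; the final localization step (showing directly that every length-$l$ gap has left endpoint $\ge r_k$, rather than arguing by contradiction) is only a cosmetic variation.
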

\begin{proof}
It is known that if $(a,\,b)$ is an internal gap of $E(a_n)$, then $b\in F$ and $a=b-a_{m_b}+r_{m_b}$ where $m_b:=\max I_b$ (cf. \cite[Corollary II.3.5]{P} or \cite[Lemma 3]{BFP2}). Suppose that $(\alpha,\,\beta)$ is an $E$-gap of length $l$ lying to the left of the gap $(r_k,\,a_k)$. Then $\beta<r_k$ and hence $I_\beta\subset\{k+1,\,k+2,\,\ldots\}$. Thus $m_\beta>k$. However,
$$
l\ =\ \beta-\alpha\ =\ \beta-(\beta-a_{m_\beta}+r_{m_\beta})\ =\ a_{m_\beta}-r_{m_\beta}
$$
which contradicts the definition of $k$.
\end{proof}

\begin{exa}
\label{exa33}
If we managed to drop the assumption on fast convergence from the Lemma \ref{l32}, we would obtain a statement that implies both the Third and the Fourth Gap Lemmas. However, it is impossible. To see that consider any series $\sum a_n$ satisfying the following four requirements: $a_1=\frac35$, $a_2=\frac12$, $a_3=\frac25$ and $r_3=\frac1{10}$. Then $(\frac7{10},\,\frac9{10})$ is the only  $E$-gap of  length $\frac2{10}$ and it is not of the form $(r_k,\,a_k)$ for any $k$.
\end{exa}

\begin{thm}
\label{t34}
Let $\sum a_n$ be a fast convergent series. If
\begin{equation}
\label{34star}
\max_{k\ge n+2}(a_k-r_k) \ <\ a_n-r_n \hspace{.3in}\text{for all $n$},
\end{equation}
then
\begin{equation}
\label{34sstar}
C(E)\ =\ \{0\}\cup\{a_n:\ n\in\mathbb N\,\}.
\end{equation}
\end{thm}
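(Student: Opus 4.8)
The inclusion $\{0\}\cup\{a_n:n\in\mathbb N\}\subseteq C(E)$ is exactly Fact \ref{f23}, so the whole content is the reverse inclusion. The plan: take $\alpha\in C(E)$ with $\alpha>0$; by Fact \ref{f31} (using fast convergence) $\alpha$ is a finite subsum, $\alpha=\sum_{n\in I}a_n$ for the unique finite index set $I=I_\alpha$, and $\alpha\le a_1$. It suffices to show $|I|=1$, for then $\alpha=a_{\min I}$. Assume instead $|I|\ge 2$. Since $\alpha\le a_1$ we cannot have $1\in I$, so $m:=\min I\ge 2$; put $N:=\max I$, so $m<N$ and $m-1\ge 1$. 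I will produce one point $w\in E$ with $w-\alpha\notin E$ and $w+\alpha\notin E$, contradicting $\alpha\in C(E)$.

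The witness is
$$
w\ :=\ a_N+\sum_{\substack{m<n<N\\ n\notin I}}a_n ,
$$
a genuine subsum of the series (the indices occurring are distinct and lie in $\{m+1,\dots,N\}$), so $w\in E$. For the first half, $w\le\sum_{n=m+1}^{N}a_n<r_m<a_m\le\alpha$, where $r_m<a_m$ is fast convergence and $\alpha=a_m+\sum_{n\in I,\,n>m}a_n>a_m$ because $|I|\ge 2$. Hence $w-\alpha<0$, so $w-\alpha\notin E$.

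For the second half, splitting off the term $a_N$ from $\sum_{n\in I,\,n>m}a_n$ and recombining gives
$$
w+\alpha\ =\ a_m+\sum_{m<n<N}a_n+2a_N\ =\ r_{m-1}+(a_N-r_N).
$$
Now I invoke hypothesis \eqref{34star} with the index $n=m-1$: since $N\ge m+1$, it yields $a_N-r_N<a_{m-1}-r_{m-1}$, and together with $a_N-r_N>0$ (fast convergence) this gives
$$
r_{m-1}\ <\ w+\alpha\ <\ r_{m-1}+(a_{m-1}-r_{m-1})\ =\ a_{m-1}.
$$
Thus $w+\alpha$ lies in the open interval $(r_{m-1},a_{m-1})$, which is an $E$-gap by the First Gap Lemma (since $a_{m-1}>r_{m-1}$), so $w+\alpha\notin E$. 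This completes the contradiction, whence $|I|=1$ and $\alpha\in\{a_n:n\in\mathbb N\}$.

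The only real difficulty is guessing $w$. Write $z:=r_m-\sum_{n\in I,\,n>m}a_n$; then $z\in E$ and $z+\alpha=r_{m-1}$, so a point $w\in E$ lying above $z$ has $w+\alpha$ above $r_{m-1}$, hence inside the gap $(r_{m-1},a_{m-1})$ provided the excess $w-z$ is smaller than the gap width $a_{m-1}-r_{m-1}$. The displayed $w$ is such a point, with $w-z=a_N-r_N$, and \eqref{34star} is precisely what forces $a_N-r_N<a_{m-1}-r_{m-1}$; fast convergence then supplies $w<a_m<\alpha$ for free, disposing of $w-\alpha$. The naive candidates (e.g.\ $w=\alpha-a_m$, or endpoints of gaps adjacent to $\alpha$) all fail because there $w+\alpha$ or $w-\alpha$ falls back into $E$, typically landing on $r_{m-1}$ itself, which is a left endpoint of a gap and so belongs to $E$.
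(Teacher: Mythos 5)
Your proof is correct, and it takes a genuinely different route from the paper's. Both arguments begin in the same place: a putative $\alpha\in C(E)$ that is not a term of the series must lie in some interval $(a_{n+1},r_n]$ (in your notation this is $(a_m,r_{m-1}]$ with $n=m-1$, since $|I|\ge2$ forces $\alpha>a_m$ while $I\subset\{m,m+1,\dots\}$ forces $\alpha\le r_{m-1}$), and both ultimately apply the hypothesis \eqref{34star} at exactly this offset. But the paper does not construct a single witness: it shows that $\alpha+(E\cap[0,\alpha))$ must be a subset of $E$ that jumps across the gap $(r_{m-1},a_{m-1})$, and then invokes the Fourth Gap Lemma (Lemma \ref{l32}) to identify the offending $E$-gap as some $(r_k,a_k)$ with $k\ge m+1$, contradicting \eqref{34star}. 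You replace that gap-lemma machinery by the explicit point $w=a_N+\sum_{m<n<N,\,n\notin I}a_n$, for which $w-\alpha<0$ and $w+\alpha=r_{m-1}+(a_N-r_N)$ falls inside the gap $(r_{m-1},a_{m-1})$; I checked the identity and the inequalities, and they are right. The price of your approach is that you must first know $\alpha$ is a \emph{finite} subsum so that $N=\max I$ exists -- you get this correctly from Fact \ref{f31}, which the paper's proof does not need (it only uses $C(E)\subset E\cap[0,a_1]$). In exchange you avoid Lemma \ref{l32} entirely, so your argument is more elementary and closer in spirit to the witness constructions the paper uses in its examples.
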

\begin{proof}
Because of the Fact \ref{f23}, it remains to show that $C(E)\subset\{0\}\cup\{a_n:\ n\in\mathbb N\,\}$. Suppose it is not true. Then there are a $y\in C(E)$ and a positive integer $n$ such that $y\in (a_{n+1},\, r_n]$. Clearly, $y\in E$ by the Fact \ref{f21}. For $x\in E\cap[0,\,y)$ points of the form $x-y$ do not belong to $E$ which implies
\begin{equation}
\label{341}
y\ +\ \bigl(E\cap[0,\,y)\bigr)\ \subset \ E,
\end{equation}
because $y\in C(E)$. We have
\begin{equation}
\label{342}
r_n\ \ge\ y\in\bigl(E\cap[0,\,y)\bigr)+y.
\end{equation}
Also $a_{n+1}+y\in\bigl(E\cap[0,\,y)\bigr)+y$ and $a_{n+1}+y>a_{n+1}+r_{n+1}=r_n$. Hence, in the view of \eqref{341}
\begin{equation}
\label{343}
a_{n+1}+y\ \ge\ a_n.
\end{equation}
By \eqref{342} and \eqref{343}, there must be a gap in the set $\bigl(E\cap[0,\,y)\bigr)+y$ containing the gap $(r_n,\,a_n)$. Now, by the Fourth Gap Lemma (Lemma \ref{l32}) there is $k$ such that the gap $(r_k,\,a_k)$ of $E\cap[0,\,y)$ is of length at least $a_n-r_n$. It cannot be the gap $(r_{n+1},\, a_{n+1})$, because $r_{n+1}+y>a_{n+1}+r_{n+1}=r_n$. There for, $k\ge n+2$ which contradicts one of the main assumptions of the Theorem \ref{t34}.
\end{proof}

\begin{cor}{\cite[Thm. 4]{BPW}}
\label{c35}
If $q\in(0,\,\frac12)$ and $a>0$, then the center  of distances of the set of subsums of the geometric series $\sum_{n=1}^\infty aq^n=\sum(a;\,q)$ consists only from 0 and the terms of the series.
\end{cor}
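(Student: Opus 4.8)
The plan is to deduce the corollary directly from Theorem \ref{t34} by exploiting the scaling behaviour of the center of distances. First I would invoke Fact \ref{f22}: since $C(qA)=|q|\,C(A)$ and the series $\sum aq^n$ arises from $\sum q^n$ by multiplying every term by $a$, it suffices to compute the center for the pure geometric series with terms $a_n=q^n$ ($n\in\mathbb N$); the general claim then follows by multiplying the result by $a$. So the whole problem reduces to verifying that $a_n=q^n$ satisfies the hypotheses of Theorem \ref{t34}.

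For $a_n=q^n$ the $n$-th remainder is $r_n=\sum_{k>n}q^k=\dfrac{q^{n+1}}{1-q}$, hence
$$
a_n-r_n\ =\ q^n\Bigl(1-\tfrac{q}{1-q}\Bigr)\ =\ q^n\cdot\tfrac{1-2q}{1-q}.
$$
Because $q\in(0,\tfrac12)$, the factor $\tfrac{1-2q}{1-q}$ is strictly positive, so $a_n>r_n$ for every $n$ and the series is fast convergent. The same closed form shows that $(a_k-r_k)_{k\ge1}$ is a strictly decreasing geometric sequence (ratio $q$) of positive numbers; consequently $\max_{k\ge n+2}(a_k-r_k)=a_{n+2}-r_{n+2}<a_n-r_n$ for all $n$, which is precisely condition \eqref{34star}.

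Theorem \ref{t34} now gives $C\bigl(E(q^n)\bigr)=\{0\}\cup\{q^n:\ n\in\mathbb N\}$, and applying Fact \ref{f22} once more yields $C\bigl(E(aq^n)\bigr)=a\cdot C\bigl(E(q^n)\bigr)=\{0\}\cup\{aq^n:\ n\in\mathbb N\}$, i.e.\ the center of distances consists exactly of $0$ and the terms of the series. There is no genuine obstacle here: the argument is a bookkeeping reduction, and the only thing requiring a line of computation is the monotonicity of the sequence $(a_k-r_k)$, which for a geometric series is immediate from the formula above.
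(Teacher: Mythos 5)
Your proof is correct and follows essentially the same route as the paper: compute $a_n-r_n=aq^n\frac{1-2q}{1-q}$, observe that this is positive and strictly decreasing in $n$, and conclude that the hypotheses of Theorem \ref{t34} are satisfied. The preliminary rescaling to $a=1$ via Fact \ref{f22} is harmless but unnecessary, since the computation goes through verbatim with the factor $a$ present.
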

\begin{proof}
Denoting $a_n:= aq^n$, we get
$$
a_n-r_n\ =\ aq^n\frac{1-2q}{1-q}.
$$
The function $f:[1,\,+\infty)\to\mathbb R:\, x\mapsto aq^x\frac{1-2q}{1-q}$ is decreasing, and thus the assumptions of the Theorem \ref{t34} are fulfilled.
\end{proof}

\begin{exa}
\label{exa36}
The Thm. \ref{t34} resulted from our failed attempt to prove that the equality \eqref{34sstar} holds for every fast convergent series. We know now that the hypothesis is false. Indeed, consider the fast convergent multigeometric series $\sum(4,\,2,\,1;\frac1{100})$. Given any $k\in\mathbb N$, we get
$$
a_{3k-2}-r_{3k-2}\ =\ a_{3k-1}-r_{3k-1}\ =\ a_{3k}-r_{3k}\ =\ \frac1{100^{k-1}}-\frac7{100^{k-1}\cdot99}.
$$
We are going to show that $3\in C(E)$. Take any $x\in E$. $x=\sum_{n\in I}a_n$ for a unique set of indices $I\subset\mathbb N$. We want to find $y=\sum_{n\in J}a_n\in E$ such that $|x-y|=3$.

If $1\in I$ and $3\not\in J$, take $J:=I\cup\{3\}\setminus\{1\}$.

If $\{1,\,2,\,3\}\subset I$, take $J:=I\setminus\{2,\,3\}$.

If $\{1,\,3\}\subset I$ and $2\not\in I$, take $J:=I\cup\{2\}\setminus\{1,\,3\}$.

If $1\not\in I$ and $3\in I$, take $J:=I\cup\{1\}\setminus\{3\}$.

If $1\not\in I$, $3\not\in I$ and $2\in I$, take $J:=I\cup\{1,\,3\}\setminus\{2\}$.

If $\{1,\,2,\,3\}\cap I=\emptyset$, take $J:=I\cup\{2,\,3\}$.

Note, in a similar manner, that every number of the form $\frac3{100^k}$, $k\in\mathbb N$, belongs to $C(E)$, not being a term of the series. A question raises if $C(E)=\{0\}\cup\{a_n: n\in\mathbb N\}\cup\{\frac3{100^k}:\ k\in\mathbb N\,\}$. We will be able to answer it positively later in the paper.
\end{exa}

\begin{exa}
The series from the previous example does not fulfill the condition \eqref{34star} for any $n$ divisible by 3. In fact, the condition \eqref{34star} is not necessary for the equality \eqref{34sstar}. To see this, consider the fast convergent multigeometric series $\sum(11,\,6,\,3:\,q)$ with $q$ such that $\sum_{n=1}^\infty q^n < \frac1{20}$. For every $k\in\mathbb N$, we get
$$
r_{3k-2}=9q^{k-1}+20\sum_{i=k}^\infty q^i, \hspace{.2in} r_{3k-1}=3q^{k-1}+20\sum_{i=k}^\infty q^i, \hspace{.2in} r_{3k}=20\sum_{i=k}^\infty q^i.
$$
Further,
$$
a_{3k-2}-r_{3k-2}\ =\ q^{k-1}\bigl(2-20\sum_{i=1}^\infty q^i\bigr),$$
$$
 a_{3k}-r_{3k}\ =\ a_{3k-1}-r_{3k-1}\ = \ q^{k-1}\bigl(3-20\sum_{i=1}^\infty q^i\bigr),
$$
for any $k\in\mathbb N$, and hence $a_n-r_n<\max_{k\ge n+2}(a_k-r_k)$ for any $n\in3\mathbb N-2$ which means that \eqref{34star} does not hold for the series.

It remains to show that the center of distances of the series does not contain anything but 0 and all terms of the series. Take any $\alpha\in\bigl(F\cap[0,\,11]\setminus\bigl(\{0\}\cup\{a_n: n\in\mathbb N\,\}\bigr)$. There exists $I_\alpha\subset\mathbb N\setminus\{1\}$ such that $2\le\overline{\overline{I_\alpha}}<+\infty$ and $\alpha=\sum_{n\in I_\alpha}a_n$.
 Define $m:=\min I_\alpha \ge2$, $M:=\max I_\alpha$ and $J:=\{1,\,2,\,\ldots,\,m-1\,\}\cup I_\alpha$. Then define $x:=\sum_{n\in(\mathbb N\setminus J)\cup\{M\}}a_n$. The inequalites $x-\alpha<r_m-a_m<0$ yield $x-\alpha\not\in E$ easily.

In order to show that $x+\alpha\not\in E$, observe first that
\begin{equation}
\label{e371}
x+\alpha\ =\ r_{m-1}+a_M.
\end{equation}
We have to analyse three cases depending on the form of the remainder $r_{m-1}$:
\begin{itemize}
\item[(a)] $\displaystyle r_{m-1}\ =\ 9q^{k-1}+20\sum_{n=k}^\infty q^n\ \text{for some $k\in\mathbb N$ (when $m=3k-1$),}$
\item[(b)] $\displaystyle r_{m-1}\ =\ 3q^{k-1}+20\sum_{n=k}^\infty q^n\ \text{for some $k\in\mathbb N$ (when $m=3k$),}$
\item[(c)] $\displaystyle r_{m-1}\ =\ 20\sum_{n=k}^\infty q^n\ \text{for some $k\in\mathbb N$ (when $m=3k-2$).}$
\end{itemize}

In the case (a) $a_M=3q^{k-1}$ or $a_M\in\bigl\{11q^{k-1+i},\,6q^{k-1+i},\,3q^{k-1+i}:\ i\in\mathbb N\,\bigr\}$.  If $a_M=3q^{k-1}$, then the inequalities
\begin{align*}
a_{3k-2}+r_{3k}\ &=\ 11q^{k-1}+20\sum_{n=k}^\infty q^n\ <\ x+\alpha\ \overset{\eqref{e371}}{=} 12q^{k-1}+20\sum_{n=k}^\infty q^n\\
&<\ 11q^{k-1}+3q^{k-1}\ =\ a_{3k-2}+a_{3k}
\end{align*}
imply that $x+\alpha\in(a_{3k-2}+r_{3k},\,a_{3k-2}+a_{3k})$, that is, $x+\alpha\not\in E$. If $a_M$ is of the form $11q^{k-1+i}$ or $6q^{k-1+i}$, or $3q^{k-1+i}$ for some $i\in\mathbb N$, then
$$
r_{3k-2}=9q^{k-1}+20\sum_{n=k}^\infty q^n\ \overset{\eqref{e371}}{<}\ x+\alpha\ \overset{\eqref{e371}}{<}\ r_{m-1}+q^{k-1}<11q^{k-1}=a_{3k-2}
$$
and hence $x+\alpha\in(r_{3k-2},\,a_{3k-2})$ which implies that $x+\alpha\not\in E$.

In the case (b) $a_M=11q^{k-1+i}$ or $6q^{k-1+i}$, or $3q^{k-1+i}$ for some $i\in\mathbb N$. Thus
$$
r_{3k-1}\ <\ 3q^{k-1}+20\sum_{n=k}^\infty q^n+a_M\ =\ x+\alpha\ <\ 3q^{k-1}+q^{k-1}+q^{k-1}<6q^{k-1}=a_{3k-1}
$$
and hence $x+\alpha\not\in E$.

In the case (c) we get $a_M=6q^{k-1+i}$ or $3q^{k-1+i}$ for some $i\in\mathbb N$ or $a_M=11q^{k-1+i}$ for some $i\ge2$. Thus
$$
r_{3k}\ <\ 20\sum_{n=k}^\infty q^n+a_M\ <\ q^{k-1}+q^{k-1}\ <\ 3q^{k-1}\ =\ a_{3k}
$$
and hence $x+\alpha\not\in E$.

We have proved that $\alpha\not\in C(E)$ and it must be $ C(E)=\{0\}\cup\{a_n:\ n\in\mathbb N\,\}$ by the Fact \ref{f23}.
\end{exa}

The problem of characterizing those fast convergent series for which \eqref{34sstar} holds remains open. A complete characterization of \eqref{34sstar} will be even more difficult, because there are series that satisfy \eqref{34sstar} and are not fast convergent (\cite[Thm. 11]{BPW}).

\section{The case of a multigeometric series}

A multigeometric series $\sum(l_1,\,\ldots,\,l_m;q)$ converges to the sum $s=\frac1{1-q}\sum_{i=1}^m l_i$. It is not difficult to see that for $E=E(l_1,\,\ldots,\,l_m;q)$ and for any $k\in\mathbb N$
\begin{equation}
\label{401}
q^kE\ =\ E_{km}.
\end{equation}

\begin{thm}
\label{t41}
Let a multigeometric series $\sum(l_1,\,\ldots,\,l_m;q)$ satisfies the properties
\begin{equation}
\label{p1}
2qs\ <\ \delta(F_m)
\end{equation}
and
\begin{equation}
\label{p2}
qs\ <\ 1
\end{equation}
where $\delta(F_m):=\min\bigl\{|f-g|:\ f,\,g\in F_m,\ f\not=g\,\bigr\}$. Then
\begin{equation}
\label{e411}
C(E)\ =\ \bigcup_{k=0}^\infty q^kC(F_m).
\end{equation}
\end{thm}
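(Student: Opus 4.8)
The plan is to exploit the block self-similarity of $E$. Grouping the terms of $\sum(l_1,\dots,l_m;q)$ into consecutive blocks of length $m$ and using \eqref{401}, one has for every $n\ge0$ the decomposition $E=\bigcup_{f\in F_{nm}}(f+E_{nm})=\bigcup_{f\in F_{nm}}(f+q^nE)$, with each piece $f+q^nE$ contained in $[f,f+q^ns]$. The inclusion $\bigcup_{k\ge0}q^kC(F_m)\subseteq C(E)$ is then immediate: given $\beta\in C(F_m)$, an integer $k\ge0$ and $x\in E$, write $x=\sum_{j\ge0}q^j\phi_j$ with $\phi_j\in F_m$ (the block decomposition of $x$), pick $\tilde\phi\in F_m$ with $|\phi_k-\tilde\phi|=\beta$, and replace the $k$-th block; then $y:=x-q^k\phi_k+q^k\tilde\phi\in E$ and $|x-y|=q^k\beta$.

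For the reverse inclusion I would first record three consequences of \eqref{p1} and \eqref{p2}. (a) Since $\delta(F_m)\le l_m$ (the distance from $0$ to the least positive element of $F_m$), \eqref{p1} gives $2qs<l_m$, and together with $l_m\le s_m=(1-q)s$ also $q<1/3$; a direct estimate — splitting on the least position at which the two $m$-block expansions of two elements of $F_{nm}$ differ, and bounding the tail by $\sum_{j\ge1}q^js_m=qs$ — then yields $|f-f'|>2q^ns$ for all distinct $f,f'\in F_{nm}$. Hence the intervals $[f,f+q^ns]$, $f\in F_{nm}$, are pairwise at distance $>q^ns$, so any real interval of length $\le q^ns$ meets at most one level-$n$ piece of $E$. (b) Because every $l_i$ is an integer and $qs<1$, every integer in $E$ already lies in $F_m$: such a point sits in some $f+qE$ with $f\in F_m\subseteq\mathbb Z$, and $0\le(\text{point})-f\le qs<1$ forces it to equal $f$. (c) More generally, if $y\in E$ has the form $y=q^kt$ with $t$ a nonnegative integer $\le2s_m$, then $t\in F_m$: for $k\ge1$ the estimate $2qs_m\le2qs<l_m$ gives $y<l_mq^{k-1}$, which is the least positive element of $F_{km}$, so $y\in q^kE$ and $t=y/q^k\in E\cap\mathbb Z$, whence $t\in F_m$ by (b); for $k=0$ this is (b) itself.

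Now let $\alpha\in C(E)$, $\alpha>0$; since $\alpha\in E$ by Fact \ref{f21}, there is a unique $k_0$ with $q^{k_0+1}s<\alpha\le q^{k_0}s$. I would then prove: (Step 1) $\alpha=q^{k_0}f^*$ for some $f^*\in F_m\setminus\{0\}$; (Step 2) $f^*\in C(F_m)$. For Step 1, note that whenever $x\in E\cap[0,\alpha)$ one has $x-\alpha<0\notin E$, so $\alpha+(E\cap[0,\alpha))\subseteq E$; since the least positive element of $F_{(k_0+1)m}$ is $l_mq^{k_0}>q^{k_0+1}s$, we have $E\cap[0,q^{k_0+1}s]=q^{k_0+1}E$, hence $\alpha+q^{k_0+1}E\subseteq E$. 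This nonempty subset of $E$ lies in an interval of length $q^{k_0+1}s$, hence by (a) inside a single level-$(k_0+1)$ piece $h+q^{k_0+1}E$; comparing minima and maxima forces $\alpha=h\in F_{(k_0+1)m}$, and then $q^{k_0+1}s<\alpha\le q^{k_0}s$ forces every block digit of $\alpha$ other than the $k_0$-th to vanish, giving $\alpha=q^{k_0}f^*$ with $f^*\in F_m\setminus\{0\}$. For Step 2, fix $g\in F_m$ and apply the definition of $C(E)$ to $x:=q^{k_0}g\in E$: some $y\in E$ has $|x-y|=\alpha=q^{k_0}f^*$, so $y=q^{k_0}(g\pm f^*)$ with $g\pm f^*$ a nonnegative integer $\le2s_m$, hence $g\pm f^*\in F_m$ by (c). Thus for every $g\in F_m$ there is $h\in F_m$ with $|g-h|=f^*$, i.e.\ $f^*\in C(F_m)$, so $\alpha\in q^{k_0}C(F_m)$. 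With $0\in C(F_m)$ this yields \eqref{e411}.

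The main obstacle is Step 1, the ``trapping'' of $\alpha$ in a single block: this is where \eqref{p1} and \eqref{p2} are genuinely used, via the level-$n$ separation estimate and the rigidity ``$\alpha=h$'' it produces. The integrality observation (c), though decisive for Step 2, is elementary once noticed; and the remaining bookkeeping — block decompositions, the identity $E\cap[0,q^{k_0+1}s]=q^{k_0+1}E$, the endpoint comparisons — is routine but needs care.
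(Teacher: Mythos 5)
Your proof is correct, and while it rests on the same three pillars as the paper's (the separation of the pieces $f+q^nE$ coming from \eqref{p1}, the integrality of $F_m$ combined with $qs<1$, and self-similarity), the key trapping device is genuinely different. The paper reduces everything to the window $(qs,\,s]$ via $C(E)\cap(q^{k+1}s,\,q^ks]\subset C(E_{km})=q^kC(E)$, and there pins $x\in C(E)\cap[l_m,\,s]$ into $F_m$ by translating the \emph{single point} $qs\in E$ (since $qs-x<0$, necessarily $qs+x\in E$) and intersecting the two coverings $\bigsqcup_f[f-qs,\,f]$ and $\bigsqcup_f[f,\,f+qs]$, whose intersection is exactly $F_m\setminus\{0\}$ under \eqref{p1}; membership in $C(F_m)$ is then obtained from the same integrality observation you use, run as a contradiction. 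You instead stay at scale $k_0$ and trap the translate of the \emph{whole bottom copy}, $\alpha+q^{k_0+1}E\subset E$, inside a single level-$(k_0+1)$ piece, forcing $\alpha=h$ by endpoint comparison. This costs you the separation estimate $|f-f'|>2q^ns$ on $F_{nm}$ for every $n$ (the paper only ever needs the level-one covering) and the auxiliary lemma (c), but it buys a self-contained argument that never invokes $C(E_{km})$ or Facts \ref{f25} and \ref{f27}; your block-replacement proof of the easy inclusion likewise just writes out directly what the paper assembles from Facts \ref{f25}, \ref{f22} and \ref{f27}. One small point in your item (a): the bound $q^{j_0}\bigl(\delta(F_m)-qs\bigr)>q^{j_0+1}s$ only gives $|f-f'|>q^ns$ when the first differing block is $j_0=n-1$, so that case must be handled separately (there is then no tail, whence $|f-f'|\ge q^{n-1}\delta(F_m)>2q^ns$), while for $j_0\le n-2$ one uses $q<\tfrac12$ (which follows from $2qs<l_m\le(1-q)s$); with this case split, which your sketch accommodates, the claim and hence the whole proof stand.
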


Although the single inequality $2qs<1$ implies both \eqref{p1} and \eqref{p2}, it is not equivalent to the conjunction of these properties.
\begin{proof}
First, we want to observe that
\begin{equation}
\label{41a}
C(E)\cap[l_m,\,s]\ \subset F_m\setminus\{0\}.
\end{equation}
Indeed, if $x\in C(E)\cap[l_m,\,s]$, then $qs-x<0$ by the property \eqref{p2}. Hence, in the light of $qs\in E$, it must be $qs+x\in E$. Thus by the Fact \ref{f21}
\begin{equation}
\label{412}
C(E)\cap[l_m,\,s]\subset\bigl\{x\in[l_m,\,s]:\ qs+x\in E\,\bigr\}\cap E\subset \bigl(E\cap[l_m,\,s]\bigr)-qs.
\end{equation}
The Fact 21.8 of \cite{BFP} together with the assumption \eqref{p1} yield
$$
E\ \subset\ \bigsqcup_{f\in F_m}[f,\,f+qs].
$$
Therefore,
$$
A:=\bigsqcup_{f\in F_m\setminus\{0\}}[f-qs,\,f]=\left( \bigsqcup_{f\in F_m\setminus\{0\}}[f,\,f+qs]\right)-qs\overset{\eqref{412}}{\supset}C(E)\cap[l_m,\,s]
$$
and
$$
B:=\bigsqcup_{f\in F_m\setminus\{0\}}[f,\,f+qs]\overset{\eqref{412}}{\supset}E\cap[l_m,s]\overset{\text{Fact \ref{f21}}}{\supset}C(E)\cap[l_m,\,s].
$$
The assumption \eqref{p1} implies that $A\cap B=F_m\setminus \{0\}$ which proves \eqref{41a}.

Next, observe that
\begin{equation}
\label{41b}
C(E)\cap[l_m,\,s]\subset C(F_m)\setminus\{0\}.
\end{equation}
Indeed, suppose that the inclusion does not hold. There exists $z\in C(E)\cap[l_m,\,s]$ such that $z\not\in C(F_m)$. Hence there is an $f\in F_m$ such that both $f+z$ and $f-z$ do not belong to $F_m$. Since $f+z\not\in F_m$ and since $z\in F_m\setminus\{0\}$ by \eqref{41a}, we get $f+z\in\mathbb N\setminus F_m$, because $F_m\subset\mathbb N$. Therefore, by \eqref{p2},
$$
d(f+z,\,E)\ \ge \ d\left(f+z,\,\bigcup_{\phi\in F_m}(\phi+[0,\,qs])\right)\ >\ 1-qs>0.
$$
Hence $f+z\not\in E$.

Analogously, since $f-z\not\in F_m$, we get $f-z\in\mathbb Z\setminus F_m$, and thus, $d(f-z,\,E)>0$, that is, $f-z\not\in E$ which together with $f+z\not\in E$ yields $z\not\in C(E)$, a contradiction.

Our third observation
\begin{equation}
\label{41c}
C(E)\cap(qs,\,s]\ \subset\ C(F_m)\setminus\{0\}
\end{equation}
follows from \eqref{41b}, the Fact \ref{f21}, because $(qs,\,l_m)$ is an $E$-gap by the assumption \eqref{p2} and by the First Gap Lemma \cite[Fact 21.9]{BFP}.

Next, observe that
\begin{equation}
\label{41d}
\forall k\in\mathbb N_0 \hspace{.3in} C(E)\cap(q^{k+1}s,\,q^ks]\ \subset\ q^kC(F_m).
\end{equation}
Indeed, take any $k\ge 1$ and $ z\in C(E)\cap(q^{k+1}s,\,q^ks]$. Since
$$
\min\,\bigcup_{f\in F_{km}\setminus\{0\}}(f+E_{km})\ = \ q^{k-1}l_m\ \overset{\eqref{p1}}{>}\ q^ks,
$$
we get $z\in E_{km}$. Now, because $z\in C(E)$ and
$$
E_{km}+E_{km}\ \subset[0,\,2q^ks]\ \overset{\eqref{p1}}{\subset}\ [0,\,q^{k-1}l_m),
$$
it must be $z\in C(E_{km})$. Hence
$$
\frac1{q^k}z\in\frac1{q^k}C(E_{km})\ \overset{\text{Fact \ref{f22}}}{=}\ C\bigl(\frac1{q^k}E_{km})\ \overset{\eqref{401}}{=}\ C(E).
$$
Moreover, $\frac1{q^k}z\in(qs,\,s]$ and thus $\frac1{q^k}z\in C(F_m)\setminus\{0\}$ by \eqref{41c}. Finally,
\begin{align*}
C(E)\ &=\ \{0\}\cup\bigcup_{k=0}^\infty(q^{k+1}s,\,q^ks]\cap C(E)\ \overset{\eqref{41d}}{\subset}\ \bigcup_{k=0}^\infty q^kC(F_m)\\
&\overset{\text{Fact \ref{f25}}}{\subset}\ \bigcup_{k=0}^\infty q^kC(E)\ =\ \bigcup_{k=0}^\infty C(q^kE)\\
&\overset{\eqref{401}}{=}\ \bigcup_{k=0}^\infty C(E_{km})\ \overset{\text{Fact \ref{f27}}}{\subset}\ \bigcup_{k=0}^\infty C(E)\ =\ C(E).
\end{align*}
\end{proof}

\begin{exa}
\label{exa42}
The Thm. \ref{t41} provides the positive answer to the question raised at the end of our Example \ref{exa36}. Indeed, for the series $\sum(4,\,2,\,1;\frac1{100})$ we obtain $s=\frac{700}{99}$, $2qs=\frac{14}{99}<1$, $m=3$. Hence both properties \eqref{p1} and \eqref{p2} are satisfied. Since $F_3=\{0,\,1,\,2,\,\ldots,\,7\,\}$, we get $\delta(F_m)=1$ and $C(F_m)=\{0,\,1,\,2,\,3,\,4\,\}$. Thus, by the Thm. \ref{t41},
$$
C(E)\ =\ \{0\}\cup\bigl\{\frac m{100^k}:\ m\in\{1,\,2,\,3,\,4\,\},\ k\in\mathbb N_0\,\bigr\}\  \varsupsetneq\ \{0\}\cup\{a_n:\ n\in\mathbb N\,\}.
$$
\end{exa}

\begin{exa}
\label{exa43}
The Thm. \ref{t41} can be applied to the series $\sum(11,\,6,\,3;q)$ for $q<\frac1{21}$ as well. This time $2qs$ is larger than 1 for $q>\frac1{41}$, but nevertheless \eqref{p1} and \eqref{p2} hold for all $q<\frac1{21}$. Clearly, $s<21$, $m=3$. Since $F_3=\{0,\,3,\,6,\,9,\,11,\,14,\,17,\,20\,\}$, we get $\delta(F_3)=2$ and hence $2sq<\delta(F_m)$. We see easily that $C(F_3)=\{0,\,3,\,6,\,11\,\}$ an thus, by the Thm. \ref{t41},
$$
C(E)\ =\ \bigl\{mq^k:\ m\in\{0,\,3,\,6,\,11\,\},\ k\in\mathbb N_0\,\bigr\}\ =\ \{0\}\cup\{a_n:\ n\in\mathbb N\,\}.
$$
\end{exa}

\begin{lem}
\label{l44}
For every multigeometric series $\sum(l_1,\,\ldots,\,l_m;q)$ and every $i\in \mathbb N$ the equality $q^{i-1}C(F_m)\subset C(F_{im})$ holds.
\end{lem}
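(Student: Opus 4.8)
The plan is to exploit the block structure of $F_{im}$ produced by the multigeometric pattern and then imitate the proof of the first inclusion in Fact~\ref{f25}, this time altering the \emph{last} block of indices rather than adjoining one new term.

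First I would record the representation
$$
F_{im}\ =\ \Bigl\{\,\sum_{k=0}^{i-1}q^k f_k:\ f_0,\,\ldots,\,f_{i-1}\in F_m\,\Bigr\}.
$$
This is immediate from the definitions: given $A\subset\{1,\,\ldots,\,im\,\}$, split it into the blocks $A_k:=A\cap\{km+1,\,\ldots,\,km+m\,\}$, $k=0,\,\ldots,\,i-1$; since $a_{km+j}=l_jq^k$, the partial sum of $\sum a_n$ over $A_k$ equals $q^k f_k$, where $f_k:=\sum l_j$ with $j$ running over those elements of $\{1,\,\ldots,\,m\,\}$ with $km+j\in A$, so $f_k\in F_m$. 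Conversely, every choice of $f_0,\,\ldots,\,f_{i-1}\in F_m$ is realized by an appropriate $A$.

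Next I would fix $\alpha\in C(F_m)$ and an arbitrary $x\in F_{im}$, writing $x=\sum_{k=0}^{i-1}q^k f_k$ with $f_k\in F_m$ as above. Since $\alpha\in C(F_m)$, there is a $g\in F_m$ with $|f_{i-1}-g|=\alpha$. Setting
$$
y\ :=\ \sum_{k=0}^{i-2}q^k f_k\ +\ q^{i-1}g,
$$
the representation above shows $y\in F_{im}$, while $|x-y|=q^{i-1}|f_{i-1}-g|=q^{i-1}\alpha$. Since $x\in F_{im}$ was arbitrary, this gives $q^{i-1}\alpha\in C(F_{im})$, and since $\alpha\in C(F_m)$ was arbitrary, the claimed inclusion $q^{i-1}C(F_m)\subset C(F_{im})$ follows.

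The argument is essentially mechanical; the only step needing care is the block bookkeeping — in particular, checking that replacing the coefficient of the last block by an arbitrary element of $F_m$ keeps the number inside $F_{im}$, which is exactly why the membership description of $F_{im}$ is isolated as the first step. (Modifying the $k$-th block instead would even yield $q^kC(F_m)\subset C(F_{im})$ for every $k\in\{0,\,\ldots,\,i-1\,\}$, but only $k=i-1$ is required here.)
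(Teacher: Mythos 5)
Your proof is correct and follows essentially the same route as the paper's: the paper writes $f=f'+f''$ with $f'\in F_{(i-1)m}$ and $f''\in q^{i-1}F_m$ and replaces the last block by $q^{i-1}f'''$ with $|q^{1-i}f''-f'''|=z$, which is exactly your ``modify the coefficient of the last block'' step, only with your finer decomposition $\sum_{k=0}^{i-2}q^kf_k$ collapsed into the single term $f'$. The extra bookkeeping you supply (the explicit block representation of $F_{im}$ and the converse realizability of any tuple) is a harmless elaboration of what the paper leaves implicit.
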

\begin{proof}
Let $z\in C(F_m)$. Take any $f\in F_{im}$. Let $f'\in F_{(i-1)m}$ and $f''\in q^{i-1}F_m$ be such that $f=f'+f''$. Since $z\in C(F_m)$, we can find $f'''\in F_m$ such that $|q^{1-i}f''-f'''|=z$. Then, for $\tilde{f}:=f'+q^{i-1}f'''$ we get $\tilde{f}\in F_{im}$ and $|f-\tilde{f}|=q^{i-1}z$.
\end{proof}

\begin{cor}
\label{c45}
Under the assumptions of the Thm. \ref{t41} the equality
$$
C(F_{km})\ =\ \bigcup_{i=1}^k\,q^{i-1}C(F_m)
$$
holds for every $k\in\mathbb N$.
\end{cor}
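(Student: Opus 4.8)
The plan is to prove the two inclusions separately. The inclusion ``$\supseteq$'' is immediate: for each $i\in\{1,\dots,k\}$ Lemma \ref{l44} gives $q^{i-1}C(F_m)\subseteq C(F_{im})$, and iterating Fact \ref{f25} gives $C(F_{im})\subseteq C(F_{km})$ because $im\le km$; hence $\bigcup_{i=1}^k q^{i-1}C(F_m)\subseteq C(F_{km})$.

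For ``$\subseteq$'' I would induct on $k$, the case $k=1$ being trivial. The inductive step then reduces to the single identity $C(F_{(k+1)m})\subseteq C(F_m)\cup qC(F_{km})$, since the inductive hypothesis $C(F_{km})=\bigcup_{i=1}^{k}q^{i-1}C(F_m)$ rewrites its right-hand side as $\bigcup_{i=1}^{k+1}q^{i-1}C(F_m)$. To prove this identity I would transplant the block argument from the proof of the Thm. \ref{t41} into the finite set $F_N$, $N:=(k+1)m$. Since $a_{jm+i}=l_iq^j$, one has $F_N=F_m+qF_{km}$; put $\rho:=q\max F_{km}=q(1-q^k)s$, so that $\rho<qs<1$ and, by \eqref{p1}, $2\rho<2qs<\delta(F_m)$. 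Consequently $F_N\subseteq\bigsqcup_{f\in F_m}[f,\,f+\rho]$ with the blocks pairwise disjoint, every $x\in F_N$ has a unique ``digit decomposition'' $x=\phi(x)+q\psi(x)$ with $\phi(x)\in F_m$ and $\psi(x)\in F_{km}$, the only integers in $F_N$ are the points of $F_m$ (because $\rho<1$ and $F_m\subseteq\mathbb N_0$), and $\max F_N=\max F_m+\rho$. I would also use that $F_m$ is symmetric about $\tfrac12\max F_m$ and that $l_m=\min(F_m\setminus\{0\})$.

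Now let $z\in C(F_N)$; by Fact \ref{f21}, $z\in F_N$. If $z\le\rho$, then $z$ lies in the block at $0$, so $z=q\tilde z$ with $\tilde z\in F_{km}$; testing $\tilde z$ against an arbitrary $qh$ with $h\in F_{km}$ and reading off the digit decomposition of the witness in $F_N$ (its $F_m$-digit is a nonnegative element of $F_m$ of size $<2\rho<\delta(F_m)$, hence $0$), one obtains $\tilde z\in C(F_{km})$, so $z\in qC(F_{km})$. If $z>\rho$, I would test $z$ against $\max F_m\in F_m\subseteq F_N$: the witness is either $\max F_m+z$, which exceeds $\max F_N=\max F_m+\rho$ and is therefore impossible, or $\max F_m-z$. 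Writing $z=\phi(z)+q\psi(z)$ and using the symmetry of $F_m$ (so $g:=\max F_m-\phi(z)\in F_m$), one checks that if $\psi(z)\ne0$ then $\max F_m-z\in[g-\rho,\,g)$ falls strictly inside the $F_N$-gap $(g'+\rho,\,g)$, where $g'$ is the $F_m$-point preceding $g$; this uses $g-g'\ge\delta(F_m)>2\rho$, and contradicts $z\in C(F_N)$. Hence $\psi(z)=0$, i.e. $z=\phi(z)\in F_m$, and then for every $f\in F_m$ the witness $f\pm z$ is an integer in $F_N$, hence an element of $F_m$, so $z\in C(F_m)$. In either case $z\in C(F_m)\cup qC(F_{km})$, which completes the inductive step.

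The step I expect to be the main obstacle is the case $z>\rho$, and more precisely ruling out a nonzero $qF_{km}$-component of $z$: this is exactly where the strength of \eqref{p1}, in the form $2\rho<\delta(F_m)$, is needed, for it guarantees that the gaps of $F_m$ persist -- with enough slack to accommodate the reflection $z\mapsto\max F_m-z$ -- after the small perturbations coming from $qF_{km}$ have been adjoined. Everything else is routine manipulation of the digit decomposition together with the scaling identity $C(qA)=|q|\,C(A)$ of Fact \ref{f22}.
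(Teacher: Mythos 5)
Your proof is correct, but for the inclusion $\subseteq$ it takes a genuinely different route from the paper's. The paper obtains everything in one short chain: since $C(F_{km})\subset F_{km}$ (Fact \ref{f21}) and the least nonzero element of $F_{km}$ is $l_mq^{k-1}>q^ks$ by \eqref{p1}, it writes $C(F_{km})=\{0\}\cup\bigl(C(F_{km})\cap(q^ks,\,s]\bigr)$, pushes this into $C(E)$ via Fact \ref{f25}, and then applies slice by slice the inclusion \eqref{41d} established inside the proof of Theorem \ref{t41}, namely $C(E)\cap(q^{i}s,\,q^{i-1}s]\subset q^{i-1}C(F_m)$; the return trip through Lemma \ref{l44} and Fact \ref{f25} closes the circle of inclusions exactly as in your $\supseteq$ direction. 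Your induction replaces the appeal to \eqref{41d} -- and hence any reference to the infinite set $E$ and its gap structure -- by a self-contained combinatorial argument inside the finite set $F_{(k+1)m}=F_m+qF_{km}$, exploiting the disjoint block decomposition forced by $2q\max F_{km}<2qs<\delta(F_m)$ together with the integrality of $F_m$. The argument checks out: for $z\le\rho$ the $F_m$-digit of any witness is bounded by $2\rho<\delta(F_m)\le l_m$ and hence vanishes, which puts $z$ in $qC(F_{km})$; for $z>\rho$ the reflection at $\max F_m$ forces $\psi(z)=0$ (the subcase $g=0$, which your gap description formally skips, is trivial since $\max F_m-z$ would then be negative and so outside $F_N$), and integrality then gives $z\in C(F_m)$. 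What the paper's route buys is brevity, since all the hard work was already done in Theorem \ref{t41}; what yours buys is a proof about the finite sets $F_{km}$ that is independent of the internal step \eqref{41d} and never mentions $E$ at all.
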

\begin{proof}
The Fact \ref{f21} implies that $C(F_{km})\subset F_{km}$. Hence
$$
C(F_{km})\ =\ \{0\}\cup\left(C(F_{km})\cap(q^ks,\,s]\right),
$$
and thus
\begin{align*}
C(F_{km})\ &\overset{\text{Fact \ref{f25}}}{\subset}\ \{0\}\cup\bigcup_{i=1}^k\bigl(C(E)\cap(q^is,\,q^{i-1}s]\bigr)\ \overset{\eqref{41d}}{\subset}\ \bigcup_{i=1}^kq^{i-1}C(F_m)\\
&\overset{\text{Lemma \ref{l44}}}{\subset}\ \bigcup_{i=1}^k C(F_{im})\ \overset{\text{Fact \ref{f25}}}{\subset}\ C(F_{km}).
\end{align*}
\end{proof}

\begin{cor}
\label{c46}
Under the assumptions of the Thm. \ref{t41} the equality
\begin{equation}
\label{461}
C(E)\ =\ \bigcup_{n=1}^\infty C(F_n)
\end{equation}
holds.
\end{cor}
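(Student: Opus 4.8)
The plan is to obtain the equality \eqref{461} as a purely formal consequence of Theorem \ref{t41}, Corollary \ref{c45} and the monotonicity recorded in Fact \ref{f25}; no further estimates should be needed.

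First I would note that, by iterating the inclusion $C(F_n)\subset C(F_{n+1})$ of Fact \ref{f25}, the family $\bigl(C(F_n)\bigr)_{n\in\mathbb N}$ is increasing, so its union is unchanged when the index runs over a cofinal set only. Since $(km)_{k\in\mathbb N}$ is cofinal in $\mathbb N$,
$$
\bigcup_{n=1}^\infty C(F_n)\ =\ \bigcup_{k=1}^\infty C(F_{km}).
$$

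Next I would substitute the value of $C(F_{km})$ furnished by Corollary \ref{c45}, which is valid under the hypotheses of Theorem \ref{t41}, and then collapse the resulting double union:
$$
\bigcup_{k=1}^\infty C(F_{km})\ =\ \bigcup_{k=1}^\infty\ \bigcup_{i=1}^k q^{i-1}C(F_m)\ =\ \bigcup_{i=1}^\infty q^{i-1}C(F_m)\ =\ \bigcup_{k=0}^\infty q^kC(F_m).
$$
By \eqref{e411} the last union is precisely $C(E)$, so the three displays taken together yield \eqref{461}.

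The only point deserving a moment of attention is the first reduction — checking that Fact \ref{f25} really gives $C(F_n)\subset C(F_{n'})$ whenever $n\le n'$, and that the arithmetic progression $(km)_k$ is cofinal in $\mathbb N$ — but this is routine bookkeeping rather than a genuine difficulty; once the chain of inclusions is in place, the corollary follows at once.
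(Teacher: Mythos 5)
Your argument is correct and is essentially identical to the paper's own proof: both reduce $\bigcup_n C(F_n)$ to $\bigcup_k C(F_{km})$ via the monotonicity in Fact \ref{f25}, substitute Corollary \ref{c45}, collapse the double union to $\bigcup_{i=1}^\infty q^{i-1}C(F_m)$, and invoke \eqref{e411} from Theorem \ref{t41}. Nothing further is needed.
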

\begin{proof}
$$
\bigcup_{n=1}^\infty C(F_n)\ \overset{\text{Fact \ref{f25}}}{=}\ \bigcup_{k=1}^\infty C(F_{km})\ \overset{\text{Cor. \ref{c45}}}{=}\ \bigcup_{i=1}^\infty q^{i-1}C(F_m) \ \overset{\text{Thm. \ref{t41}}}{=}\ C(E).
$$
\end{proof}

Let us conclude the note with the observation that there are series not fulfilling the requirements of the Thm. \ref{t41} and yet satisfying the equality \eqref{461}. The Guthrie-Nymann series $\sum(3,\,2;\frac14)$ has the sum $s=6\frac23$ and $ \delta(F_2)=1$. Hence it does not satisfy neither the condition \eqref{p1} nor \eqref{p2}. Nevertheless. the equality \eqref{461} is true for the series, since $C(F_2)=\{0,\,2,\,3\,\}$ and $C(E)=\{0\}\cup\bigl\{\frac k{4^n}:\ k\in\{2,\,3\,\}, n\in\mathbb N\,\bigr\}$ \cite[Thm. 11]{BPW}.

\end{document}